\newtheorem{theorem}{Theorem}[section]
\newtheorem*{observation*}{Observation}
\newtheorem*{remark}{Remark}
\newtheorem{corollary}{Corollary}[theorem]
\newtheorem*{theorem*}{Theorem}
\begin{document}

\baselineskip=15pt plus1pt

\setcounter{secnumdepth}{4}
\setcounter{tocdepth}{3}

\begin{titlepage}
\vspace{2pt}
\begin{center}
\Large
       \textbf{The extension of Weyl-type relative perturbation bounds to general Hermitian matrices}

\small
\textbf{Haoyuan Ma}\footnote{Email adress: {haoyuan}\_{ma@outlook.com}

\textbf{Funding:} This research did not receive any specific grant from funding agencies in the public, commercial, or not-for-profit sectors.} 
\end{center}
\textit{\small{Mathematical Institute, University of Oxford, Andrew Wiles Building, Woodstock Road, Oxford, United Kingdom, OX2 6GG.}} 

 \textbf{Abstract.}
We extend several relative perturbation bounds to Hermitian matrices that are possibly singular, and also develop a general class of relative bounds for Hermitian matrices. As a result, corresponding relative bounds for singular values of rank-deficient $m\times n$ matrices are also obtained using the Jordan-Wielandt matrices.  We also present that the main relative bound derived would be invariant with respect to congruence transformation under certain conditions, and compare its sharpness with the Weyl's absolute perturbation bound. 

\textbf{Keywords:} Eigenvalue perturbation; Hermitian matrices; Rank-deficient matrices; Relative perturbation bounds; Congruence transformation; Singular value 

\textbf{AMS subject classification:} 15A18, 15A42, 15B57
\section{Introduction}
Given a square matrix $A$, it has been an active research area to give bounds on the perturbed eigenvalues of $A+E$, where $E$ can be interpreted differently, such as any genuine addition to $A$, or a matrix arises due to floating-point errors as considered in [11]. When both $A$ and $E$ are Hermitian, the Weyl's bound [7, Theorem 4.3.1] gives \begin{equation*}
  -\|E\| \leq \lambda_i(A+E)-\lambda_i(A)\leq \|E\|
\end{equation*}
for $i\in \{1,...,n\}$, establishing a one-to-one pairing between $\lambda_i(A+E)$ and $\lambda_i(A)$, and in general a perturbation bound with such correspondence is classified as a Weyl-type bound in [6] and [8]. The Weyl's bound belongs to absolute perturbation results as it concerns the absolute difference between $\lambda_i(A+E)$ and $\lambda_i(A)$, whereas another category of perturbation results is relative bounds that consider the distance between $\lambda_i(A+E)$ and $\lambda_i(A)$ relative to $\lambda_i(A)$. There have been many relative bounds developed in [3, 4, 6, 8, 10, 11] and in particular, the Weyl-type relative bounds always 
 take the form \begin{equation*}
    -k|\lambda_i(A)|\leq \lambda_i(A+E)-\lambda_i(A) \leq k|\lambda_i(A)|
\end{equation*}
for $i\in \{1,...,n\}$ where different constants $k$ correspond to different relative bounds. A condition for these results which is always imposed is that $A$ is Hermitian non-singular
and $E$ is Hermitian, for example, $A$ and $A+E$ are taken to be positive definite in [6, 8, 10]. In contrast, the Weyl's bound  just requires both $A$ and $E$ to be Hermitian. Hence, in this paper our main goal is to extend the existing Weyl-type relative perturbation bounds to the case that $A$ may not have full rank, so to apply these relative bounds we do not need to make stronger assumptions on the structure of $A$ than to apply the Weyl's bound. 

In Section 2, we will first establish a relative bound in Theorem 2.1 which extends [4, Theorem 2.1] considering the case that $A$ has rank $r\leq n$. To follow, we will use this result to give a class of Weyl-type relative perturbation results that not only incorporate the major Weyl-type relative bounds for eigenvalues in [4, 6, 8, 10] but also take into account that $A$ is potentially singular.

The sharpness of the bound in Theorem 2.1 depends on the $i$th eigenvalue of $A$ whereas the Weyl's bound is invariant with respect to the indices, so we will investigate for which indices the relative bound is likely to be sharper than the Weyl's bound in Section 3. Also, we will look into a known relation between Theorem 2.1 and congruence transformation but additionally consider the case that $A$ could be singular.

In Section 4, we will use Theorem 2.1 to give a relative perturbation bound for singular values of any general $m \times n$ matrices of rank $r$.  

\textbf{Preliminaries.} We let $A^*$ denote the Hermitian of $A$. When both $A$ and $B$ are Hermitian, we denote $A\geq \mathbf{0}$ if $A$ is positive semi-definite, and $B\leq A$ if $A-B$ is positive semi-definite.  When $A$ is normal, it has a spectral decomposition $A=VDV^*$ where $D=\mbox{diag}(\lambda_1, ..., \lambda_n)$ and $V$ is unitary. We can always arrange its eigenvalues to be decreasing with respect to the indices which is our default ordering when $A$ is non-singular. When $A$ may not have full rank, the default ordering with respect to the inertia is $\lambda_1\geq...\geq \lambda_r, \lambda_{r+1}=...=\lambda_n=0$ where $r=\mbox{rank}(A)$. 

When $A=VDV^*$ is normal, we define $A^{1/2}=VD^{1/2}V^*=V \mbox{diag}(\lambda_1^{1/2},...,\lambda_n^{1/2})V^*$ to be any normal square root of $A$ as in [4], so that $(A^{1/2})^2=A$. We denote $A=V_rD_rV_r^*$ as its thin spectral decomposition. Since $0^{1/2} \equiv 0$, $A^{1/2}=V_rD_r^{1/2}V_r^*$.

For any matrix $A$, we let $\|A\|$ denote its 2-norm which is $\sigma_1(A)$, where the singular values are arranged in decreasing order. For any square matrix $A$, we have $\|A\|\geq \max\limits_i|\lambda_i(A)|$. In particular, when $A$ is Hermitian, $\|A\|=\max\limits_i|\lambda_i(A)|=|\lambda_{j^*}(A)|$ for some index $j^*$. We let $A^+$ denote its unique pseudo inverse. If $A=U\Sigma V^*=U_r\Sigma_rV_r^*$ is its thin SVD, $A^+=V_r\Sigma_r^{-1}U_r^*$.

\section{Weyl-type relative bounds for eigenvalues}
\subsection{The main Weyl-type relative bound}
The main relative perturbation bound of this section stems from [10, Theorem 2.3] which states that if $A$, $A+E$ are positive definite and $k=\|A^{-1/2}EA^{-1/2}\|$, we get $\lambda_i(A)-k|\lambda_i(A)|\leq \lambda_i(A+E)\leq \lambda_i(A)+k|\lambda_i(A)|$. Theorem 2.1 in [4] extends this result to $A$ being Hermitian non-singular by imposing the additional condition that $k\leq 1$. We are going to show that an analogue also holds without assuming that $A$ is non-singular, by following a similar approach in [4] but considering $A^+$ instead.
 \begin{theorem}
Let $A$ be Hermitian with rank $r\leq n$ and eigenvalues $\lambda_1\geq...\geq \lambda_r, \lambda_{r+1}=...=\lambda_n=0$. Let $E$ be Hermitian and the eigenvalues of $A+E$ be in decreasing order.   If $k=\|(A^{1/2})^{+} E(A^{1/2})^{+}\|\leq 1$, we have for any $i\in \{1,...,r\},$
\begin{equation}
\lambda_{n-r+i}(A+E)\leq \lambda_i+k|\lambda_i|
    \end{equation}
and \begin{equation}
    \lambda_i(A+E) \geq \lambda_i -k|\lambda_i|
\end{equation}
   
\begin{proof}
 Take the spectral decomposition $A=VDV^*=V_rD_rV_r^*$ and hence its polar factorisation $A=PS=SP$ by taking  \begin{equation*}
     P=\left[\begin{array}{cc}
        V_r  &  V_r^\perp
         \end{array}\right] \left[\begin{array}{cc}
     |D_r| &  \\
      & \mathbf{0}
 \end{array}\right] \left[\begin{array}{cc}
      V_r^*  \\
      {V_r^{\perp}}^*
 \end{array}\right]
\end{equation*}
\begin{equation*}
S=\left[\begin{array}{cc}
    V_r &  V_r^\perp
\end{array}\right] \left[\begin{array}{cc}
     H_r &  \\
      & I_{n-r} 
 \end{array}\right] \left[\begin{array}{cc}
      V_r^*  \\
     {V_r^{\perp}}^*
 \end{array}\right]
 \end{equation*}
where we denote $H_r=\mbox{sign}(D_r)$. As a result, \begin{equation*} \begin{array}{cc}
(A^{1/2})^+=V_rD_r^{-1/2}V_r^*, & (P^{1/2})^+=V_r|D_r|^{-1/2}V_r^* \end{array}
\end{equation*} Because $
    \lambda_j^{1/2}=(\mbox{sign}(\lambda_j))^{1/2}|\lambda_j|^{1/2}$ for all $1 \leq j \leq r$,    $|\lambda_j|^{-1/2}=(\mbox{sign}(\lambda_j))^{1/2}\lambda_j^{-1/2}$ where $|\lambda_j|^{-1/2}$ are entries of $|D_r|^{-1/2}$, $(\mbox{sign}(\lambda_j))^{1/2}$ are entries of $H_r^{1/2}$ and $\lambda_j^{-1/2}$ are entries of $D_r^{-1/2}$. Then, $
|D_r|^{-1/2}=D_r^{-1/2}H_r^{1/2}=H_r^{1/2}D_r^{-1/2}$. 
Therefore, \begin{equation} \begin{aligned}
    (P^{1/2})^{+}
    &=(V_rD_r^{-1/2}V_r^*)(V_rH_r^{1/2}V_r^*)\\
    &=V_rD_r^{-1/2}V_r^*\left[\begin{array}{cc}
         V_r & V_r^\perp 
    \end{array}\right] \left[\begin{array}{cc}
     H_r^{1/2}&  \\
      & I_{n-r} 
 \end{array}\right]\left[\begin{array}{cc}
      V_r^*  \\
      {V_r^{\perp}}^*
 \end{array}\right]\\
    &=(A^{1/2})^{+}W=W(A^{1/2})^+
        \end{aligned}
    \end{equation} 
  where the last equality in (3) is obtained by re-doing the derivation while swapping $V_rD_r^{-1/2}V_r^*$ and $V_rH_r^{1/2}V_r^*$ around. $W$ is unitarily diagonalisable with eigenvalues being roots of unity as shown in (3), so $W$ is unitary, which leads to
    \begin{equation}
        \begin{aligned}
        k&=\|(A^{1/2})^{+}E(A^{1/2})^{+}\|\\
        &= \|W(A^{1/2})^{+}E(A^{1/2})^{+}W\|\\
        &=\|(P^{1/2})^{+}E(P^{1/2})^{+}\|
        \end{aligned}
        \end{equation}
Since both $(P^{1/2})^+$ and $E$ are Hermitian, so is $(P^{1/2})^{+}E(P^{1/2})^{+}$. Following (4), we have $-kI_n \leq (P^{1/2})^{+}E(P^{1/2})^{+}\leq kI_n$. Multiplying all three sides both left and right by $P^{1/2}$ preserves the order, and $(P^{1/2})^+P^{1/2}=P^{1/2}(P^{1/2})^+=V_rV_r^*$, so we get $-kP\leq V_rV_r^*EV_rV_r^*\leq kP$. Adding $A$ to all sides gives \begin{equation*} 
 A-kP \leq A+V_rV_r^*EV_rV_r^* \leq A+kP
 \end{equation*} 
 Noticing that $V_rV_r^*AV_rV_r^*=V_rV_r^*V_rD_rV_rV_r^*V_r^*=V_rD_rV_r^*=A$, we have \begin{equation}
    A-kP\leq V_rV_r^*(A+E)V_rV_r^*\leq A+kP
\end{equation}
Applying the monotonicity theorem [7, Corollary 4.3.12] to (5), we get \begin{equation}
    \lambda_i (A-kP)\leq \lambda_i (V_rV_r^*(A+E)V_rV_r^*) \leq \lambda_i (A+kP)
\end{equation}

To find the correspondence between  $\lambda_i(V_rV_r^*(A+E)V_rV_r^*)$ and eigenvalues of $A+E$, we apply that $\mbox{eig}(XY)=\mbox{eig}(YX)$ [7, Theorem 1.3.22]: for $i\in \{1,...,r\}$, \begin{equation*} \begin{aligned}
    \lambda_i(\underbrace{V_rV_r^*}_X\underbrace{(A+E)V_rV_r^*}_Y)&=\lambda_i((A+E)(V_rV_r^*V_rV_r^*))\\
    &=\lambda_i((A+E)V_rV_r^*)\\
    &=\lambda_i(V_r^*(A+E)V_r)
\end{aligned}
\end{equation*}
To consider $V_r^*(A+E)V_r$ in relation to $A+E$, we let \begin{equation} \begin{aligned}
    V^*(A+E)V&= \left[\begin{array}{cc}
       V_r^*  \\
       {V_r^{\perp}}^*
    \end{array}\right](A+E) \left[\begin{array}{cc}
        V_r & V_r^\perp
        \end{array}\right] \\
    &=\left[\begin{array}{cc}
       V_r^*(A+E)V_r  & V_r^*(A+E)V_r^\perp \\
        {V_r^{\perp}}^*(A+E)V_r & {V_r^{\perp}}^*(A+E)V_r^\perp
    \end{array}\right]
\end{aligned}
\end{equation}
Hence, $V_r^*(A+E)V_r$ is the $r\times r$ principal sub-matrix of $V^*(A+E)V$, both of which are Hermitian. Using Theorem 4.3.28 in [7], we get for $i\in \{1,...,r\}$, \begin{equation}\lambda_{n-r+i}(V^*(A+E)V)\leq \lambda_i(V_r^*(A+E)V_r)\leq \lambda_i(V^*(A+E)V) \end{equation} $V^*(A+E)V$ and $A+E$ have identical eigenvalues, so (8) becomes $\lambda_{n-r+i}(A+E)\leq \lambda_i(V_r^*(A+E)V_r)\leq \lambda_i(A+E)$. Hence, for $i\in\{1,...,r\}$, \begin{equation} \begin{aligned}
     \lambda_{n-r+i}(A+E)&\leq \lambda_i(V_r^*(A+E)V_r)\\
     &= \lambda_i(V_rV_r^*(A+E)V_rV_r^*)\\
     &\overset{(6)}{\leq} \lambda_i(A+kP)
     \end{aligned} 
\end{equation}
and \begin{equation} \begin{aligned}
    \lambda_i(A+E)&\geq \lambda_i(V_r^*(A+E)V_r)\\
    &=\lambda_i(V_rV_r^*(A+E)V_rV_r^*)\\
    &\overset{(6)}{\geq} \lambda_i(A-kP)
\end{aligned}
    \end{equation}
 where $A \pm kP= V_r (D_r \pm k|D_r|) V_r^*$.  $k\leq 1$ is sufficient to ensure that $\lambda_j \leq \lambda_{j'}$ always implies that $\lambda_j \pm k|\lambda_j|\leq \lambda_{j'}\pm k|\lambda_{j'}| $, because the functions $f, g:\mathbb{R} \rightarrow \mathbb{R}$ defined by $f(x)=x + k|x|, g(x)=x-k|x|$ are increasing over $\mathbb{R} $ when $k\leq 1$. So for any $i$, \begin{equation}
 \lambda_i (A \pm kP)= \lambda_i \pm k|\lambda_i| \end{equation}
 Substituting (11) into (9) and (10) we get (1) and (2) for $i\in \{1,...,r\}$ respectively. 
 
\end{proof}
\end{theorem}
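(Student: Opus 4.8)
The plan is to imitate the full-rank argument of [4, Theorem 2.1], replacing inverses by Moore--Penrose pseudo-inverses and carefully tracking the kernel of $A$. The idea is to first move the problem from $A$ to its positive semi-definite polar factor, then run an operator-inequality/monotonicity chain, and finally convert the resulting eigenvalue sandwich back to the spectrum of $A+E$.

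First I would fix the thin spectral decomposition $A = V_r D_r V_r^*$ and pass to the polar-type factorisation $A = PS = SP$, where $P$ is the positive semi-definite factor supported on $\mathrm{range}(A)$ (equal to $V_r|D_r|V_r^*$ there and $\mathbf{0}$ on $\ker A$) and $S$ carries the signs $H_r = \mathrm{sign}(D_r)$ together with the identity on $\ker A$. Working with $P$ rather than $A$ is useful because $P \geq \mathbf{0}$, so control of a norm yields a clean two-sided operator inequality. To transfer the norm, I would observe that $|D_r|^{-1/2} = D_r^{-1/2}H_r^{1/2}$, which gives $(P^{1/2})^+ = (A^{1/2})^+W = W(A^{1/2})^+$ for $W = V_rH_r^{1/2}V_r^* + V_r^\perp{V_r^\perp}^*$; since $W$ is normal with eigenvalues among the roots of unity, it is unitary, whence $k = \|(A^{1/2})^+E(A^{1/2})^+\| = \|(P^{1/2})^+E(P^{1/2})^+\|$.

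Next I would run the monotonicity chain. From $\|(P^{1/2})^+E(P^{1/2})^+\| = k$ and Hermitian-ness, $-kI_n \leq (P^{1/2})^+E(P^{1/2})^+ \leq kI_n$; conjugating by $P^{1/2}$ on both sides and using $(P^{1/2})^+P^{1/2} = P^{1/2}(P^{1/2})^+ = V_rV_r^*$ gives $-kP \leq V_rV_r^*EV_rV_r^* \leq kP$, and adding $A$ (note $V_rV_r^*AV_rV_r^* = A$) yields $A - kP \leq V_rV_r^*(A+E)V_rV_r^* \leq A + kP$. The monotonicity theorem for Hermitian matrices then sandwiches $\lambda_i(V_rV_r^*(A+E)V_rV_r^*)$ between $\lambda_i(A-kP)$ and $\lambda_i(A+kP)$. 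It remains (i) to connect the middle term to the spectrum of $A+E$: by $\mathrm{eig}(XY) = \mathrm{eig}(YX)$ it equals $\lambda_i(V_r^*(A+E)V_r)$ for $i \leq r$, and $V_r^*(A+E)V_r$ is the $r\times r$ leading principal submatrix of $V^*(A+E)V$, which is unitarily similar to $A+E$, so the interlacing theorem gives $\lambda_{n-r+i}(A+E) \leq \lambda_i(V_r^*(A+E)V_r) \leq \lambda_i(A+E)$; and (ii) to evaluate $\lambda_i(A \pm kP) = \lambda_i(V_r(D_r \pm k|D_r|)V_r^*)$, where the hypothesis $k \leq 1$ makes $x \mapsto x \pm k|x|$ monotone increasing, so the sorted order of the $\lambda_j$ is preserved and $\lambda_i(A \pm kP) = \lambda_i \pm k|\lambda_i|$. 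Combining (i), (ii) and the monotonicity chain gives (1) and (2).

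The step I expect to be the main obstacle --- and the reason the two inequalities are asymmetric in their indices --- is (i): the identity $\mathrm{eig}(XY) = \mathrm{eig}(YX)$ only aligns the nonzero parts of the spectra, so only the top $r$ eigenvalues can be legitimately pushed through the compression $V_rV_r^*(A+E)V_rV_r^*$. This is precisely what restricts $i$ to $\{1,\dots,r\}$ and forces $\lambda_{n-r+i}(A+E)$ (rather than $\lambda_i(A+E)$) on the upper side; the remainder is a routine adaptation of the non-singular proof once the pseudo-inverse bookkeeping of the first two paragraphs is settled.
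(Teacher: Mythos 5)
Your proposal is correct and follows essentially the same route as the paper's proof: the same polar factorisation $A=PS$, the same unitary $W$ linking $(P^{1/2})^+$ to $(A^{1/2})^+$, the same operator inequality $A-kP\leq V_rV_r^*(A+E)V_rV_r^*\leq A+kP$ followed by monotonicity, $\mathrm{eig}(XY)=\mathrm{eig}(YX)$, interlacing of the $r\times r$ principal submatrix, and the evaluation $\lambda_i(A\pm kP)=\lambda_i\pm k|\lambda_i|$ via monotonicity of $x\mapsto x\pm k|x|$ for $k\leq 1$. Your closing remark correctly identifies the interlacing/compression step as the source of the index shift $n-r+i$ in the upper bound.
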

\begin{remark} \textnormal{As pointed out in [4], the condition $k\leq 1$ is necessary in order to have $\lambda_i (A \pm kP)= \lambda_i \pm k|\lambda_i|$ because when $k>1$, for $ |\lambda_j|\gg|\lambda_{j'}|$ while $\lambda_j<0$, $\lambda_{j'}>0$, $\lambda_j+k|\lambda_j|>\lambda_{j'}+k|\lambda_{j'}|$ so (11) is not true in this case. Even when $A$ is positive semi-definite so $\lambda_l\geq 0$ for all $l$, (11) does not hold for $k>1$, because the function $g(x)=x-k|x|$ is decreasing on $[0, \infty)$ when $k>1$. However, following the argument in [4], we can drop the constraint that $k\leq 1$ when both $A, A+E$ are positive semi-definite, recovering an analogue of [6, Corollary 4.2] and [10, Theorem 2.3]:}
\end{remark}

\begin{corollary}
Let $A$ be positive semi-definite with rank $r\leq n$ and eigenvalues $\lambda_1\geq...\geq \lambda_r>0=\lambda_{r+1}=...=\lambda_n$. Let $A+E$ also be  positive semi-definite with eigenvalues in decreasing order, $k=\|(A^{1/2})^+E(A^{1/2})^+\|$. Then for any $i\in \{1,...,r\}$, 
\begin{equation*} 
\begin{array}{cc}
\lambda_{n-r+i}(A+E)\leq (1+k)\lambda_i, & \lambda_i(A+E) \geq (1-k)\lambda_i\\
\end{array}
\end{equation*}
   \end{corollary}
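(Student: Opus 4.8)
The plan is to recycle the argument of Theorem 2.1 almost verbatim, observing that the hypothesis $k\le 1$ enters that proof only at the very last step, equation (11), where $\lambda_i(A\pm kP)$ is rewritten as $\lambda_i\pm k|\lambda_i|$. Everything up to and including inequalities (9) and (10) --- the passage through the polar factor $P$, the unitary $W$, the squeeze $-kI_n\le (P^{1/2})^{+}E(P^{1/2})^{+}\le kI_n$, the monotonicity theorem, and the interlacing of the principal submatrix $V_r^*(A+E)V_r$ inside $V^*(A+E)V$ --- uses nothing about the size of $k$, so it stays valid here for every $k\ge 0$. Hence I would start from
\[
\lambda_{n-r+i}(A+E)\le \lambda_i(A+kP),\qquad \lambda_i(A+E)\ge \lambda_i(A-kP),\qquad i\in\{1,\dots,r\}.
\]

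Next I would exploit that for positive semi-definite $A$ the polar factor degenerates: since $D_r\ge\mathbf{0}$ we have $|D_r|=D_r$, hence $P=V_r|D_r|V_r^*=V_rD_rV_r^*=A$ and therefore $A\pm kP=(1\pm k)A$. For the upper bound this already suffices: because $1+k\ge 0$, scaling by $1+k$ preserves the decreasing order of the eigenvalues of $A$, so $\lambda_i(A+kP)=\lambda_i((1+k)A)=(1+k)\lambda_i$ for $i\in\{1,\dots,r\}$, and the first displayed inequality gives $\lambda_{n-r+i}(A+E)\le(1+k)\lambda_i$ with no restriction on $k$ whatsoever.

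For the lower bound I would split on the size of $k$. If $k\le 1$ then $1-k\ge 0$, the same scaling argument gives $\lambda_i(A-kP)=(1-k)\lambda_i$, and the second displayed inequality yields $\lambda_i(A+E)\ge(1-k)\lambda_i$; indeed in this regime the corollary is merely Theorem 2.1 read off with $|\lambda_i|=\lambda_i$. If $k>1$ then $(1-k)\lambda_i<0$ for every $i\in\{1,\dots,r\}$ (using $\lambda_i>0$), while the extra hypothesis that $A+E$ is positive semi-definite forces $\lambda_i(A+E)\ge 0$; combining, $\lambda_i(A+E)\ge 0>(1-k)\lambda_i$. Either way the claimed bound holds.

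The only place the proof of Theorem 2.1 genuinely fails for $k>1$ is this lower bound, and the role of the additional positive-semidefiniteness assumption on $A+E$ is precisely to render that case vacuous --- the same device used in [4] --- which is what lets us discard the constraint $k\le 1$ and recover the analogues of [6, Corollary 4.2] and [10, Theorem 2.3]. I would note in closing that no analogue of the order-preservation step (11) is needed, so the present argument is in fact shorter than that of Theorem 2.1; the one point to handle with care is simply keeping the $k>1$ case separate so that the sign of $1-k$ does not cause the eigenvalue indexing in $\lambda_i((1-k)A)$ to reverse.
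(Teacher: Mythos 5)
Your proposal is correct and follows essentially the same route as the paper: reuse the proof of Theorem 2.1 through inequalities (9) and (10), note that for positive semi-definite $A$ the order-preservation step holds for any $k$ on the upper-bound side (the paper phrases this via $x\mapsto x+kx$ being increasing on the nonnegative eigenvalues, you via $A+kP=(1+k)A$, which is the same observation), and dispose of the lower bound for $k>1$ by invoking $\lambda_i(A+E)\geq 0$. No gaps.
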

    \begin{proof}
       We follow the same proof as Theorem 2.1 until getting (9) and (10), where $A \pm kP= V_r (D_r \pm k|D_r|) V_r^*$. Since $A$ is positive semi-definite, $\lambda_l\geq 0$ for all $l$. As a result, for any $k$, $\lambda_j\leq \lambda_{j'}$ implies that $\lambda_j +k\lambda_j\leq \lambda_{j'}+k\lambda_{j'}$, which means $\lambda_i(A+kP)=\lambda_i+k\lambda_i$ for any $i$. Substituting it into (9), we get (1)  for $i\in \{1,...,r\}$. 
       
       On the other hand, for any $i\in \{1,...,r\}$, when $k>1$, $\lambda_i-k\lambda_i\leq 0$, while $\lambda_i(A+E)\geq 0$ because $A+E$ is positive semi-definite, so $\lambda_i-k\lambda_i\leq \lambda_i(A+E).$ We have shown that this also holds when $k\leq 1$ in Theorem 2.1, so (2) always holds in this case.    
       
       \end{proof} 
       
$\|(A^{1/2})^+E(A^{1/2})^+\|$ is not the only choice of $k$ to give a relative bound. As shown in the following result which uses a similar proof as [4, Lemma 2.2], $\|A^+E\|$ or $\|EA^{+}\|$ can also take the role of $\|(A^{1/2})^+E(A^{1/2})^+\|$ in Theorem 2.1. 
\begin{theorem}
Let $A$ be  normal with rank $r\leq n$ and $E$ be Hermitian, 
then 
\begin{equation*}
    \|(A^{1/2})^+E(A^{1/2})^+\|\leq \|A^+E\|=\|EA^+\|
    \end{equation*}
 \begin{proof}
    Take the respective polar decomposition $A=PS=SP$ and $E=\Tilde{P}\Tilde{S}=\Tilde{S}\Tilde{P}$ where $P=V_r|D_r|V_r^*.$ Then $A^+=P^+S^*=S^*P^+$, where $A^+$, $P^+$ are Hermitian, so
    \begin{equation}
\|A^+E\|=\|S^*P^+\Tilde{P}\Tilde{S}\|=\|P^+\Tilde{P}\|=\|\Tilde{P}^*(P^+)^*\|=\|\Tilde{S}^*\Tilde{P}^*(P^+)^*S^*\|=\|EA^+\|
        \end{equation}
Using (4), $\|(A^{1/2})^+E(A^{1/2})^+\|=\|(P^{1/2})^+E(P^{1/2})^+\|.$
As $(P^{1/2})^+=V_r|D_r|^{-1/2}V_r^*$ is Hermitian, $(P^{1/2})^+E(P^{1/2})^+$ is Hermitian. Therefore, using $\mbox{eig}(XY)=\mbox{eig}(YX)$, \begin{equation}
\begin{aligned}
     \|(P^{1/2})^+E(P^{1/2})^+\|&=\max\limits_i|\lambda_i(\underbrace{(P^{1/2})^+}_{X}\underbrace{E(P^{1/2})^+}_{Y})|\\
     &=\max\limits_i|\lambda_i(E(P^{1/2})^+(P^{1/2})^+)|\\
     &=\max\limits_i|\lambda_i(EP^+)|\\
     &\leq \|EP^+\|=\|E(P^+S^*)\|=\|EA^+\|
     \end{aligned}
\end{equation}
\end{proof}
\end{theorem}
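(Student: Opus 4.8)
The plan is to route both claims through the Hermitian positive semi-definite ``modulus'' factor of a polar decomposition, since there the $2$-norm collapses to a spectral radius and the identity $\mbox{eig}(XY)=\mbox{eig}(YX)$ may be applied without restriction. Exactly as in the proof of Theorem 2.1 I would take the polar factorisation $A=PS=SP$ with $P=V_r|D_r|V_r^*\geq\mathbf{0}$ and $S$ unitary --- the existence of \emph{commuting} factors is precisely where normality of $A$ is used --- and likewise $E=\Tilde{P}\Tilde{S}=\Tilde{S}\Tilde{P}$ with $\Tilde{P}\geq\mathbf{0}$ Hermitian and $\Tilde{S}$ unitary, where $\Tilde{S}$ is taken to be a genuine unitary (not merely a partial isometry) by letting it act as the identity on $\ker E$. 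Checking the four Penrose conditions on $A=PS=SP$ gives $A^+=P^+S^*=S^*P^+$, with $P^+=V_r|D_r|^{-1}V_r^*$ Hermitian.

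For the equality $\|A^+E\|=\|EA^+\|$ I would substitute the two polar decompositions and strip the unitary factors using unitary invariance of the $2$-norm: $\|A^+E\|=\|S^*P^+\Tilde{P}\Tilde{S}\|=\|P^+\Tilde{P}\|$, and since $P^+$ and $\Tilde{P}$ are Hermitian, $\|P^+\Tilde{P}\|=\|(P^+\Tilde{P})^*\|=\|\Tilde{P}P^+\|$. On the other side $EA^+=\Tilde{S}\Tilde{P}P^+S^*$, so unitary invariance again gives $\|EA^+\|=\|\Tilde{P}P^+\|$; the two expressions therefore agree.

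For the inequality, equation (4) already reduces the left-hand side to $\|(P^{1/2})^+E(P^{1/2})^+\|$. As $(P^{1/2})^+=V_r|D_r|^{-1/2}V_r^*$ is Hermitian, so is $(P^{1/2})^+E(P^{1/2})^+$, whence its $2$-norm equals its spectral radius; applying $\mbox{eig}(XY)=\mbox{eig}(YX)$ with $X=(P^{1/2})^+$, $Y=E(P^{1/2})^+$, together with the identity $\big((P^{1/2})^+\big)^2=P^+$, then yields
\[
\|(P^{1/2})^+E(P^{1/2})^+\|=\max_i|\lambda_i(EP^+)|\leq\|EP^+\|=\|EP^+S^*\|=\|EA^+\|,
\]
where the inequality is the general bound $\|M\|\geq\max_i|\lambda_i(M)|$ recorded in the Preliminaries and the last two equalities use unitary invariance of the $2$-norm and $A^+=P^+S^*$. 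Combining this with the previous paragraph gives the full statement.

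The eigenvalue-swap step and the unitary-invariant-norm rearrangements are routine. The point that needs genuine care is the rank-deficient bookkeeping: that $S$ and $\Tilde{S}$ may indeed be chosen unitary rather than mere partial isometries, that $A^+=P^+S^*=S^*P^+$ holds (and not just $A^+=S^+P^+$), and that $\big((P^{1/2})^+\big)^2=P^+$. Each of these is immediate from the explicit $V_r$-block forms, but it is exactly where one must be explicit so that the possibly singular case is genuinely covered rather than quietly assumed away.
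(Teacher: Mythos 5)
Your proposal is correct and follows essentially the same route as the paper's own proof: the commuting polar factorisations of $A$ and $E$, the identity $A^+=P^+S^*=S^*P^+$, unitary invariance to get $\|A^+E\|=\|EA^+\|$, and the reduction via (4) plus $\mbox{eig}(XY)=\mbox{eig}(YX)$ and $\bigl((P^{1/2})^+\bigr)^2=P^+$ to bound $\|(P^{1/2})^+E(P^{1/2})^+\|$ by $\|EP^+\|=\|EA^+\|$. The only cosmetic difference is that you establish $\|P^+\Tilde{P}\|=\|\Tilde{P}P^+\|$ directly by taking adjoints of the Hermitian factors rather than re-wrapping with $\Tilde{S}^*$ and $S^*$ as the paper does, which is arguably cleaner since it avoids the paper's incidental (and for merely normal $A$, not quite accurate) remark that $A^+$ is Hermitian.
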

\subsection{A class of Weyl-type relative bounds}
Having seen relative bounds of the form (1) and (2) where the choices of $k$ can be $\|(A^{1/2})^+E(A^{1/2})^+\|$, $\|A^+E\|$ and $\|EA^+\|$, a follow-up question is that whether they belong to a larger class of relative perturbation results. The motivating result of this mainly comes from [6, Corollary 2.4] giving a general form of Bauer-Fike type relative bounds, which we will state below:
\begin{theorem}[Eisenstat and Ipsen, 1998]
    Let $A$ be a non-singular diagonalisable matrix. If $A=A_1A_2=A_2A_1$ and $\kappa (X)$ is the conditional number of eigenvector matrix of $A$, then for any given eigenvalue $\lambda (A+E)$, \begin{equation*}
        \min\limits_i\frac{|\lambda_i(A)-\lambda(A+E)|}{|\lambda_i(A)|}\leq \kappa (X)\|A_1^{-1}EA_2^{-1}\|
    \end{equation*}
    \end{theorem}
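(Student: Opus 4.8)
The plan is to run the classical Bauer--Fike resolvent argument, but with the trivial factorisation $A+E-\mu I = I\cdot(A+E-\mu I)$ replaced by one built from the commuting factors $A_1,A_2$, which is precisely what converts absolute spectral gaps into relative ones. Write $\mu := \lambda(A+E)$. First I would dispose of the degenerate case: if $\mu$ equals some $\lambda_i(A)$ then the left-hand side is $0$ and the inequality is immediate, so assume $\mu\notin\{\lambda_1(A),\dots,\lambda_n(A)\}$. Note that, since $A$ is non-singular and $A=A_1A_2$, both $A_1$ and $A_2$ are invertible, and the hypothesis $A_1A_2=A_2A_1$ says they commute; hence $A_1^{-1}A_2^{-1}=(A_2A_1)^{-1}=A^{-1}$ and $A_1^{-1}AA_2^{-1}=A_1^{-1}A_1A_2A_2^{-1}=I$.

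Next I would use that $A+E-\mu I$ is singular and conjugate this relation on the left by $A_1^{-1}$ and on the right by $A_2^{-1}$, which preserves singularity. By the two identities just recorded,
\[
A_1^{-1}(A+E-\mu I)A_2^{-1} \;=\; I + A_1^{-1}EA_2^{-1} - \mu A^{-1},
\]
so $I + A_1^{-1}EA_2^{-1} - \mu A^{-1}$ is singular. Now I would bring in the diagonalisation $A=XDX^{-1}$ with $D=\mathrm{diag}(\lambda_1(A),\dots,\lambda_n(A))$, so that $A^{-1}=XD^{-1}X^{-1}$ and, because $\mu$ is not an eigenvalue of $A$, the matrix $I-\mu A^{-1}$ is invertible. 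Factoring
\[
I + A_1^{-1}EA_2^{-1} - \mu A^{-1} \;=\; (I-\mu A^{-1})\bigl(I + (I-\mu A^{-1})^{-1}A_1^{-1}EA_2^{-1}\bigr),
\]
singularity of the product forces the second factor to be singular, so $-1$ is an eigenvalue of $(I-\mu A^{-1})^{-1}A_1^{-1}EA_2^{-1}$, and therefore $\|(I-\mu A^{-1})^{-1}A_1^{-1}EA_2^{-1}\|\ge 1$.

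Finally I would estimate that norm. Writing $(I-\mu A^{-1})^{-1}=X(I-\mu D^{-1})^{-1}X^{-1}$ and using submultiplicativity of the $2$-norm,
\[
1 \;\le\; \|X\|\,\|(I-\mu D^{-1})^{-1}\|\,\|X^{-1}\|\,\|A_1^{-1}EA_2^{-1}\| \;=\; \kappa(X)\,\|(I-\mu D^{-1})^{-1}\|\,\|A_1^{-1}EA_2^{-1}\|.
\]
Since $(I-\mu D^{-1})^{-1}$ is diagonal with entries $\lambda_i(A)/(\lambda_i(A)-\mu)$, its $2$-norm is $\max_i |\lambda_i(A)|/|\lambda_i(A)-\mu| = \bigl(\min_i |\lambda_i(A)-\mu|/|\lambda_i(A)|\bigr)^{-1}$; substituting and rearranging gives the claimed bound.

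I expect the only genuine obstacle to be finding the right conjugation in the second step: recognising that squeezing $A+E-\mu I$ between $A_1^{-1}$ and $A_2^{-1}$ produces precisely $I + A_1^{-1}EA_2^{-1} - \mu A^{-1}$, with the $-\mu A^{-1}$ term being exactly what turns the absolute Bauer--Fike gaps $|\lambda_i(A)-\mu|$ into the relative gaps $|\lambda_i(A)-\mu|/|\lambda_i(A)|$. Everything afterwards is standard resolvent/eigenvalue bookkeeping; the one place where commutativity of $A_1$ and $A_2$ (rather than merely $A=A_1A_2$) is used is in identifying $A_1^{-1}A_2^{-1}$ with $A^{-1}$, and it is worth flagging that this is what makes both factorisations of $A$ in the hypothesis necessary.
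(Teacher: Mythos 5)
Your proof is correct. Be aware that the paper does not actually prove this statement: it is quoted from [6, Corollary 2.4], and the text only records the one-line indication that it follows by applying the Bauer--Fike theorem of [1] to the similarity-transformed pair $\hat A=A_2AA_2^{-1}=A_2A_1=A$ and $\hat E=A_2EA_2^{-1}$ (so that $\hat A+\hat E$ is similar to $A+E$), after which commutativity gives $\hat A^{-1}\hat E=A_2^{-1}A_1^{-1}A_2EA_2^{-1}=A_1^{-1}EA_2^{-1}$. Your argument instead inlines that whole reduction: you sandwich the singular matrix $A+E-\mu I$ between $A_1^{-1}$ and $A_2^{-1}$ to obtain $I+A_1^{-1}EA_2^{-1}-\mu A^{-1}$ and then run the standard resolvent estimate. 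The two routes are essentially the same computation in different packaging --- unwinding the reduction-to-Bauer--Fike proof lands on exactly your factorisation through $I-\mu A^{-1}$ --- but yours is self-contained and makes visible where each hypothesis enters: $A=A_1A_2$ gives $A_1^{-1}AA_2^{-1}=I$, $A=A_2A_1$ gives $A_1^{-1}A_2^{-1}=A^{-1}$, and diagonalisability is used only in the final norm bound through $\kappa(X)$. All the individual steps (invertibility of $A_1$ and $A_2$, the singularity of the second factor forcing $-1$ into the spectrum and hence $\|(I-\mu A^{-1})^{-1}A_1^{-1}EA_2^{-1}\|\geq 1$, and the identification of $\|(I-\mu D^{-1})^{-1}\|$ with the reciprocal of the minimal relative gap) check out; the only cosmetic quibble is that multiplying on the left by $A_1^{-1}$ and on the right by $A_2^{-1}$ is not literally a conjugation, though nothing in the argument relies on it being one.
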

    It was proven in [6] by applying the Bauer-Fike theorem in [1] to $A_2AA_2^{-1}$ and $A_2EA_2^{-1}$. In order to find a class of relative bounds incorporating the choices of $k$ in Theorem 2.1 and 2.2, we establish a class of relative bounds for $A, E$ being Hermitian with a one-to-one pairing between $\lambda_i(A+E)$ and $\lambda_i(A)$ in the following theorem, which will be used later to give an analogue of Theorem 2.3.
 \begin{theorem}
Let $A$ be Hermitian of rank $r\leq n$ with eigenvalues $\lambda_1\geq...\geq \lambda_r, \lambda_{r+1}=...=\lambda_n=0$, and polar factor $P$. Let $E$ be Hermitian, $A_1=P_1S_1$ be its left polar factorisation, $A_2=S_2P_2$ be its right polar factorisation. If $P=P_1P_2=P_2P_1$ and $k=\|A_1^+EA_2^+\|\leq 1$, then for any $i\in \{1,...,r\},$
\begin{equation}
\begin{array}{cc}
\lambda_{n-r+i}(A+E)\leq \lambda_i+k|\lambda_i|, & \lambda_i(A+E) \geq \lambda_i -k|\lambda_i|
\end{array}\end{equation}        
where the eigenvalues of $A+E$ are in decreasing order.
\end{theorem}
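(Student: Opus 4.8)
The plan is to deduce Theorem 2.4 from Theorem 2.1. Writing $k'=\|(A^{1/2})^{+}E(A^{1/2})^{+}\|$ for the constant that appears in Theorem 2.1, I would prove that under the present hypotheses
\[
  k' \;\le\; k \;=\; \|A_1^{+}EA_2^{+}\| \;\le\; 1 .
\]
Once this is known, Theorem 2.1 yields $\lambda_{n-r+i}(A+E)\le\lambda_i+k'|\lambda_i|$ and $\lambda_i(A+E)\ge\lambda_i-k'|\lambda_i|$ for $i\in\{1,\dots,r\}$, and since $k'\le k$ and $|\lambda_i|\ge 0$ these a fortiori give (14).

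The first step is to strip off the unitary polar factors. Since $A_1,A_2$ are square, we may take $S_1,S_2$ to be genuinely unitary, so that the reverse-order identities $A_1^{+}=S_1^{*}P_1^{+}$ and $A_2^{+}=P_2^{+}S_2^{*}$ hold. Then $A_1^{+}EA_2^{+}=S_1^{*}\bigl(P_1^{+}EP_2^{+}\bigr)S_2^{*}$, whence $k=\|P_1^{+}EP_2^{+}\|$ by unitary invariance of the spectral norm.

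The second step links $P_1^{+}EP_2^{+}$ to the Hermitian matrix $(P^{1/2})^{+}E(P^{1/2})^{+}$, along the same lines as the proof of Theorem 2.2. By (4), $k'=\|(P^{1/2})^{+}E(P^{1/2})^{+}\|$, and since this matrix is Hermitian, $k'=\max\limits_i|\lambda_i((P^{1/2})^{+}E(P^{1/2})^{+})|$. The hypothesis $P=P_1P_2=P_2P_1$ with $P_1,P_2$ positive semi-definite gives, by simultaneous diagonalisation, $P^{+}=P_2^{+}P_1^{+}$, while $(P^{1/2})^{+}(P^{1/2})^{+}=P^{+}$ as in Theorem 2.2. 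Applying $\mbox{eig}(XY)=\mbox{eig}(YX)$ repeatedly then gives
\[
  \mbox{eig}\bigl((P^{1/2})^{+}E(P^{1/2})^{+}\bigr)
  =\mbox{eig}(EP^{+})=\mbox{eig}(EP_2^{+}P_1^{+})=\mbox{eig}(P_1^{+}EP_2^{+}),
\]
so $k'=\max\limits_i|\lambda_i(P_1^{+}EP_2^{+})|\le\|P_1^{+}EP_2^{+}\|=k\le 1$. This completes the reduction, and hence the proof.

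The hard part will be the bookkeeping with pseudoinverses: the reverse-order law $(MN)^{+}=N^{+}M^{+}$ fails in general, so it matters that $S_1,S_2$ are unitary (which legitimises $A_1^{+}=S_1^{*}P_1^{+}$ and $A_2^{+}=P_2^{+}S_2^{*}$) and that $P_1$ commutes with $P_2$ --- precisely the hypothesis $P=P_1P_2=P_2P_1$, which is also what makes $P^{+}=P_2^{+}P_1^{+}$ hold. A further subtlety, again mirrored in the proof of Theorem 2.2, is that $P_1^{+}EP_2^{+}$ need not be Hermitian, so its eigenvalues cannot be controlled directly as in Theorem 2.1; instead one relates, through $\mbox{eig}(XY)=\mbox{eig}(YX)$, the eigenvalues of the Hermitian matrix $(P^{1/2})^{+}E(P^{1/2})^{+}$ to those of $P_1^{+}EP_2^{+}$ and then bounds the largest eigenvalue modulus by the spectral norm.
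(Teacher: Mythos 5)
Your argument is correct, and it is a genuinely different (and shorter) route than the paper's. You reduce Theorem 2.4 to Theorem 2.1 by proving the scalar inequality $k'=\|(A^{1/2})^{+}E(A^{1/2})^{+}\|\le\|A_1^{+}EA_2^{+}\|=k\le 1$ and then weakening the resulting bounds from $k'$ to $k$; the key steps --- stripping the unitary factors to get $k=\|P_1^{+}EP_2^{+}\|$, using the commuting family $P,P_1,P_2$ to get $P^{+}=P_2^{+}P_1^{+}$ (note the diagonal blocks on the kernel satisfy $\Lambda_{n-r}\Sigma_{n-r}=\mathbf{0}$ entrywise, so $\Sigma_{n-r}^{+}\Lambda_{n-r}^{+}=\mathbf{0}$ as well, which is the small observation your ``by simultaneous diagonalisation'' glosses over), and then the similarity trick $\mbox{eig}\bigl((P^{1/2})^{+}E(P^{1/2})^{+}\bigr)=\mbox{eig}(EP^{+})=\mbox{eig}(P_1^{+}EP_2^{+})$ --- all check out, and the last of these is exactly the mechanism of Theorem 2.2 generalised from the pair $(A^{1/2},A^{1/2})$ versus $(A,I)$ to the pair $(P^{1/2},P^{1/2})$ versus $(P_1,P_2)$. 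The paper instead works at the level of operator inequalities: it re-establishes the sandwich $-kP\le V_rV_r^{*}EV_rV_r^{*}\le kP$ directly with the constant $k$ (by applying Theorem 2.1's lower bound to the auxiliary matrices $\tilde A=k|D_r|$, $\tilde E=\mp V_r^{*}EV_r$) and then repeats the monotonicity/interlacing argument from (5) onward. Your reduction buys brevity and makes transparent that every bound of the form (14) with constant $\|A_1^{+}EA_2^{+}\|$ is simply a weakening of the single bound with constant $\|(A^{1/2})^{+}E(A^{1/2})^{+}\|$ --- a fact the paper's proof does not make explicit; the paper's version buys the intermediate semidefinite inequality (16), which is reused verbatim as the entry point into the rest of Theorem 2.1's proof.
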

\begin{proof}
  Let $A=VDV^*=V_rD_rV_r^*$ be its spectral decomposition with the eigenvalues arranged in the specified ordering, then $P=V_r|D_r|V_r^*$.  Since $PP_1=P_1P_2P_1=P_1P,$ $ PP_2=P_2P_1P_2=P_2P$
 and $P_1P_2=P_2P_1$, we have $P, P_1$ and $P_2$ forming a commuting family. Since they are all Hermitian, by Theorem 2.5.5 in [7], $P_1$ and $P_2$ are also diagonalisable through $V$. 
Let \begin{equation*}
    P_1=V\Lambda V^*=\left[\begin{array}{cc}
        V_r  &  V_r^\perp
         \end{array}\right] \left[\begin{array}{cc}
     \Lambda_r &  \\
      & \Lambda_{n-r}
 \end{array}\right] \left[\begin{array}{cc}
      V_r^*  \\
      {V_r^{\perp}}^*
 \end{array}\right]\end{equation*}
 \begin{equation*}
      P_2=V\Sigma V^*=\left[\begin{array}{cc}
        V_r  &  V_r^\perp
         \end{array}\right] \left[\begin{array}{cc}
     \Sigma_r &  \\
      & \Sigma_{n-r}
 \end{array}\right] \left[\begin{array}{cc}
      V_r^*  \\
     {V_r^{\perp}}^*
 \end{array}\right] \end{equation*}
 Since $P=P_1P_2$, we have $V|D|V^*=V\Lambda\Sigma V^*$. Multiplying its both sides by $V^*$ on the left and $V$ on the right, we have \begin{equation}
     \left[\begin{array}{cc}
     |D_r| &  \\
      & \mathbf{0}
 \end{array}\right] = \left[\begin{array}{cc}
     \Lambda_r\Sigma_r &  \\
      & \Lambda_{n-r}\Sigma_{n-r}
 \end{array}\right] 
 \end{equation}
where both $\Lambda_r$ and $\Sigma_r$ are invertible because all diagonal entries of $|D_r|$ are non-zero. Hence, $P_1^+=V\left[\begin{array}{cc}
     \Lambda_r^{-1}  &  \\
     & \Lambda_{n-r}^+\
  \end{array}\right]V^*$ and $P_2^+=V\left[\begin{array}{cc}
     \Sigma_r^{-1}  &  \\
     & \Sigma_{n-r}^+
  \end{array}\right]V^*$. In addition, since $A_1^+=S_1^*P_1^+$ and $A_2^+=P_2^+S_2^*$, \begin{equation*}
k=\|A_1^+EA_2^+\|=\|S_1^*P_1^+EP_2^+S_2^*\|=\|P_1^+EP_2^+\|
 \end{equation*}
Our next aim is to show that \begin{equation}
     A-kP\leq A+V_rV_r^*EV_rV_r^*\leq A+kP
      \end{equation}     
      because if this holds,  using the same reasoning starting from (5) in Theorem 2.1, we can obtain (14). Let $P_1^r=V_r\Lambda_rV_r^*$ and $P_2^r=V_r\Sigma_r V_r^*$, then
      \begin{equation*}
\begin{aligned}
    P_1^rP_1^+&=V\left[\begin{array}{cc}
         \Lambda_r &  \\
          & \mathbf{0}
     \end{array}\right]V^*V\left[\begin{array}{cc}
        \Lambda_r^{-1} &  \\
         & \Lambda_{n-r}^+
    \end{array}\right]V^*=V_rV_r^*
    \end{aligned}
 \end{equation*}
  Similarly, $P_2^+P_2^r=V\left[\begin{array}{cc}
     \Sigma_r^{-1}  &  \\
       & \Sigma_{n-r}^+
  \end{array}\right]\left[\begin{array}{cc}
     \Sigma_r  &  \\
       & \mathbf{0}
  \end{array}\right]V^*=V_rV_r^*$. Therefore, we have \begin{equation}
P_1^rP_1^+EP_2^+P_2^r=V_rV_r^*EV_rV_r^*
  \end{equation} When $k=\|P_1^+EP_2^+\|=0$, $P_1^+EP_2^+=\mathbf{0}$ and  $V_rV_r^*EV_rV_r^*=\mathbf{0}$ by (17), so (16) always holds. We can assume $k>0$ from now on.  
  To show that $ P_1^rP_1^+EP_2^+P_2^r\leq kP$, it is equivalent to show that the eigenvalues of $kP-P_1^rP_1^+EP_2^+P_2^r=V_rk|D_r|V_r^*-V_rV_r^*EV_rV_r^*=V_r(k|D_r|-V_r^*EV_r)V_r^*$ are non-negative, which are all the $r$ eigenvalues of $k|D_r|-V_r^*EV_r$ together with $n-r$ copies of zeros. 
 So it suffices to show that all eigenvalues of $k|D_r|-V_r^*EV_r$ are non-negative. Let $\Tilde{A}=k|D_r|$ and  $\Tilde{E}=-V_r^*EV_r$. Using (2) in Theorem 2.1, we get for all $i\in\{1,...,r\}$, \begin{equation}
     \lambda_i(\Tilde{A}+\Tilde{E} )\geq \lambda_i(\Tilde{A})-\|\Tilde{A}^{-1/2}\Tilde{E}\Tilde{A}^{-1/2}\|\lambda_i(\Tilde{A})
  \end{equation}
Since $V_r^*P_1^rP_1^+EP_2^+P_2^rV_r\overset{(17)}{=}V_r^*(V_rV_r^*EV_rV_r^*)V_r=-\Tilde{E}$ and $|D_r|^{-1/2}$ are Hermitian,  so is $|D_r|^{-1/2}V_r^*P_1^rP_1^+EP_2^+P_2^rV_r|D_r|^{-1/2}$. Using $\mbox{eig}(XY)=\mbox{eig}(YX)$, \begin{equation}
  \begin{aligned}
      \|\Tilde{A}^{-1/2}\Tilde{E}\Tilde{A}^{-1/2}\| 
      &=k^{-1}\||D_r|^{-\frac{1}{2}}V_r^*P_1^rP_1^+EP_2^+P_2^rV_r|D_r|^{-\frac{1}{2}}\|\\
      &=k^{-1}\max\limits_j|\lambda_j(\underbrace{|D_r|^{-\frac{1}{2}}V_r^*P_1^r}_{X}\underbrace{P_1^+EP_2^+P_2^rV_r|D_r|^{-\frac{1}{2}}}_{Y})|  \\ 
&=k^{-1}\max\limits_j|\lambda_j(P_1^+EP_2^+P_2^rV_r|D_r|^{-1}V_r^*P_1^r)|\\
&=k^{-1}\max\limits_j|\lambda_j(P_1^+EP_2^+P_2^rP^+P_1^r)|\\
\end{aligned}
    \end{equation}

 Recall (15), $|D_r|^{-1}=(\Lambda_r \Sigma_r)^{-1}=\Sigma_r^{-1}\Lambda_r^{-1}$,then 
  \begin{equation}
\begin{aligned}P_2^rP^+P_1^r&=V_r\Sigma_r V_r^*V_r|D_r|^{-1}V_r^*V_r\Lambda_r V_r^*\\
&=V_r\Sigma_r(\Sigma_r^{-1}\Lambda_r^{-1})\Lambda_rV_r^*\\
          &=V_rV_r^*
          \end{aligned}
      \end{equation}  
Hence, substituting (20) into (19), we have 
\begin{equation}
\begin{aligned}
\|\Tilde{A}^{-1/2}\Tilde{E}\Tilde{A}^{-1/2}\|&=k^{-1}\max\limits_j|\lambda_j(P_1^+EP_2^+V_rV_r^*)|\\
&\leq k^{-1}\|P_1^+EP_2^+V_rV_r^*\|\\&\leq k^{-1}\|P_1^+EP_2^+\|\|V_rV_r^*\|=k^{-1}k=1
\end{aligned}
\end{equation}
Combining (21) with (18) we have $  \lambda_i(\Tilde{A}+\Tilde{E} )\geq \lambda_i(\Tilde{A})-\lambda_i(\Tilde{A})=0$ for any $i\in \{1,...,r\}$. Therefore,  all $r$ eigenvalues of $k|D_r|-V_r^*EV_r$ are non-negative which shows that $kP-P_1^rP_1^+EP_2^+P_2^r$ is positive semi-definite, so $V_rV_r^*EV_rV_r^*=P_1^rP_1^+EP_2^+P_2^r \leq kP$. 

Similarly, $P_1^rP_1^+EP_2^+P_2^r \geq -kP$ is obtained by showing that $\lambda_i(k|D_r|+V_r^*EV_r)\geq 0$ for all $i\in \{1,...,r\}$, which is a result of repeating the derivation from (18) to (21) but for $\Tilde{A}=k|D_r|$ and $\Tilde{E}=V_r^*EV_r.$ \footnote[2]{The change in sign of $\Tilde{E}$ does not affect the related 2-norm or absolute value.} Hence, we have $ -kP\leq V_rV_r^*EV_rV_r^*\leq kP$, so (16) holds. If we now repeat the argument starting from (5) to the end of Theorem 2.1, we will attain (14) for any $i\in\{1,..,r\}$.

\end{proof}
 We can now give a class of relative bounds for Hermitian matrices that resembles Theorem 2.3 more:

\begin{theorem}
    Let $A$ be Hermitian with rank $r\leq n$ and eigenvalues $\lambda_1\geq...\geq \lambda_r, \lambda_{r+1}=...=\lambda_n=0$, and let $E$ be Hermitian. If $A_1, A_2$ are matrices such that  $A=A_1A_2=A_2A_1$, and $k=\|A_1^+EA_2^+\|\leq1 $, then for any $i\in \{1,...,r\}$,\begin{equation*}
\begin{array}{cc}
\lambda_{n-r+i}(A+E)\leq \lambda_i+k|\lambda_i|, & \lambda_i(A+E) \geq \lambda_i -k|\lambda_i|
\end{array}\end{equation*}
 where the eigenvalues of $A+E$  are in decreasing order.\end{theorem}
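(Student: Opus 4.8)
The plan is to derive Theorem 2.7 from Theorem 2.6. Given $A_1,A_2$ with $A=A_1A_2=A_2A_1$, I would fix a left polar factorisation $A_1=P_1S_1$ (so $P_1=(A_1A_1^*)^{1/2}$ and $S_1$ a partial isometry) and a right polar factorisation $A_2=S_2P_2$ (so $P_2=(A_2^*A_2)^{1/2}$), exactly as in Theorem 2.6. The standing hypotheses on $A$ and $E$, and the bound $k=\|A_1^+EA_2^+\|\le 1$, carry over verbatim, so the whole theorem reduces to checking the one remaining hypothesis of Theorem 2.6, namely $P=P_1P_2=P_2P_1$, where $P=|A|$ is the polar factor of $A$; once that is verified, Theorem 2.6 gives the stated inequalities directly.

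To prove $P=P_1P_2=P_2P_1$ I would first record the commutation relations forced by $A=A_1A_2=A_2A_1$: from $AA_1=A_1A_2A_1=A_1A$ and $AA_2=A_2A_1A_2=A_2A$, together with their adjoints (using $A^*=A$), one gets that $A$ commutes with $A_1,A_1^*,A_2,A_2^*$, hence with $A_1A_1^*$ and $A_2^*A_2$, hence with their positive semi-definite square roots $P_1$ and $P_2$. Since $A$ is Hermitian, commuting with $A$ is the same as being block-diagonal for the orthogonal splitting $\mathbb{C}^n=\mathrm{range}(A)\oplus\ker A$ into spectral subspaces of $A$; thus $A_1,A_2,P_1,P_2$ and $P$ are all block-diagonal there, and it suffices to check $P_1P_2=P_2P_1=P$ on each of the two summands.

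On $\mathrm{range}(A)$ the restriction of $A$ is invertible, and $A_1A_2=A_2A_1=A$ then forces the restriction of $A_1$ to be invertible with that of $A_2$ equal to $A(A_1)^{-1}$ (as operators on $\mathrm{range}(A)$); substituting this into $P_2^2=A_2^*A_2$ and using that the restriction of $A$ is Hermitian and commutes with that of $A_1$ (hence with $A_1A_1^*$ and with $|A|=P$) yields $P_2=P(P_1)^{-1}$ on $\mathrm{range}(A)$, whence $P_1P_2=P_2P_1=P$ there. On $\ker A$ we instead have $A_1A_2=A_2A_1=0$; taking adjoints gives, within this block, $\mathrm{range}(A_2^*)\subseteq\ker(A_1^*)=\ker P_1$ and $\mathrm{range}(A_1)\subseteq\ker A_2=\ker P_2$, which combined with $\mathrm{range}(P_2)=\mathrm{range}(A_2^*)$ and $\mathrm{range}(P_1)=\mathrm{range}(A_1)$ forces $P_1P_2=P_2P_1=0=P$. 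Assembling the two blocks gives $P=P_1P_2=P_2P_1$, and an appeal to Theorem 2.6 completes the proof.

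The step I expect to need the most care is the passage to the block-diagonal picture, since everything else rests on it: one must be sure that $A_1,A_2,P_1,P_2$ genuinely preserve the spectral subspaces of $A$ and that the splitting is orthogonal, so that restriction commutes with adjunction and the identity $P_2=P(P_1)^{-1}$ on $\mathrm{range}(A)$ is legitimate — this last point also uses that the square root of a product of two commuting positive semi-definite operators factors as the product of their square roots. Beyond that the argument is routine linear algebra; in essence it only shows that the hypotheses of Theorem 2.6 are automatically implied by those of Theorem 2.7.
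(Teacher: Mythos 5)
Your proposal is correct, and its overall architecture is exactly the paper's: both reduce the statement to Theorem 2.4 by taking $P_1=(A_1A_1^*)^{1/2}$ from the left polar factorisation of $A_1$ and $P_2=(A_2^*A_2)^{1/2}$ from the right polar factorisation of $A_2$, and then verifying the single remaining hypothesis $P=P_1P_2=P_2P_1$. Where you differ is in how that identity is verified. The paper observes that $A$, $A_1A_1^*$ and $A_2^*A_2$ form a commuting family of Hermitian matrices, simultaneously diagonalises them by one unitary $V$ via [7, Theorem 2.5.5], and then reads everything off at the level of diagonal matrices: $A^2=A_1A_1^*A_2^*A_2$ gives $D^2=\Sigma_1^2\Sigma_2^2$, hence $|D|=\Sigma_1\Sigma_2$ and $P=V\Sigma_1\Sigma_2V^*=P_1P_2=P_2P_1$ in one stroke, with no case distinction between $\mathrm{range}(A)$ and $\ker A$. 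Your route establishes the same commutation relations but then works operator-theoretically on the orthogonal splitting $\mathbb{C}^n=\mathrm{range}(A)\oplus\ker A$, handling the invertible block via $A_2=AA_1^{-1}$ and uniqueness of positive semi-definite square roots, and the kernel block via range/kernel inclusions. This is sound — the delicate points you flag (restriction commuting with adjunction on an orthogonal, invariant splitting; $(XY)^{1/2}=X^{1/2}Y^{1/2}$ for commuting positive semi-definite $X,Y$) all go through — but it costs you a two-case analysis and several square-root manipulations that the simultaneous-diagonalisation argument dispenses with, since once everything is diagonal in a common basis the singular block and the invertible block are treated uniformly. The two proofs buy the same thing; the paper's is shorter and less fragile, yours makes the geometric content (what happens on $\ker A$ versus on $\mathrm{range}(A)$) more explicit.
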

 \begin{proof}
Since $A=A_1A_2=A_2A_1$ and $A=A^*$,  \begin{equation*}
    AA_1A_1^*=A_1A_2A_1A_1^*=A_1A_1^*A_2^*A_1^*=A_1A_1^*A
\end{equation*}
\begin{equation*}
    AA_2^*A_2=A_2^*A_1^*A_2^*A_2=A_2^*A_2A_1A_2=A_2^*A_2A
\end{equation*}
\begin{equation*}
A_1A_1^*A_2^*A_2=A_1A_2A_1A_2=A_2^*A_1^*A_2^*A_1^*=A_2^*A_2A_1A_1^*
\end{equation*}
As a result, $A, A_1A_1^*$ and $A_2^*A_2$ form a commuting family of Hermitian matrices. By [7, Theorem 2.5.5], there is a unitary matrix $V$ such that $A=VDV^*$ where the eigenvalues are in the specified order above, and also $A_1A_1^*=V\Lambda_1V^*$ and $A_2^*A_2=V\Lambda_2V^*$ where $\Lambda_1, \Lambda_2$ are diagonal, so $A_1A_1^*A_2^*A_2=V\Lambda_1V^*V\Lambda_2V^*=V\Lambda_1\Lambda_2V^*=V\Lambda_2\Lambda_1V^*$. Since $A_1A_1^*, A_2^*A_2$ are positive semi-definite, there exist non-negative diagonal $\Sigma_1, \Sigma_2$ such that $\Lambda_1=\Sigma_1^2$ and $\Lambda_2=\Sigma_2^2$. Because $A^2=VD^2V^*$ and also \begin{equation*}
  A^2=(A_1A_2)(A_1A_2)=A_1(A_2A_1)A_2=A_1A_1^*A_2^*A_2=V\Sigma_1^2\Sigma_2^2V^*=V\Sigma_2^2\Sigma_1^2V^*
\end{equation*}
 we have $D^2=\Sigma_1^2\Sigma_2^2=\Sigma_2^2\Sigma_1^2$, where $D$ has all real entries since $A$ is Hermitian, while $\Sigma_1, \Sigma_2$ have non-negative entries. Therefore, $|D|=\Sigma_1\Sigma_2=\Sigma_2\Sigma_1$, so the polar factor of $A$ is $P=V|D|V^*=V\Sigma_1\Sigma_2V^*=V\Sigma_2\Sigma_1V^*$. 

On the other hand, by Theorem 7.3.1 in [7], the left polar factor of $A_1=P_1S_1$ is $P_1=(A_1A_1^*)^{1/2}=(V\Sigma_1^2V^*)^{1/2}=V\Sigma_1V^*$, and the right polar factor of $A_2=S_2P_2$ is $P_2=(A_2^*A_2)^{1/2}=(V\Sigma_2^2V^*)^{1/2}=V\Sigma_2V^*.$ Therefore, we see $P=P_1P_2=P_2P_1$, so we can apply Theorem 2.4 and obtain (14) for $i\in \{1,...,r\}$. \end{proof}
\begin{remark}
\textnormal{When $A$ and $A+E$ are both positive semi-definite, the condition $k\leq 1$ can be dropped in Theorem 2.4 and Theorem 2.5 because of the same reasoning as in Corollary 2.1.1 apart from the different definition of $k$. Theorem 2.5 includes all the previous choices of $k$ in Theorem 2.1 and Theorem 2.2 by taking $A_1=A_2=A^{1/2}$, and $A_1=A$, $A_2=I$ or $A_1=I$, $A_2=A$.}
\end{remark}

\section{Property and sharpness of the main result}
\subsection{Relation to congruence transformation}
[8, Corollary 2.2] and [10, Theorem 2.4] show that $k=\|A^{-1/2}EA^{-1/2}\|$ exhibits the property of being invariant when $A$ and $E$ are both positive definite and congruence transformed by the same invertible matrix $D$. Such $D$ can be taken to be diagonal to extract the scaling out of $A$ so that $DAD^*$ has entries and eigenvalues about the same order of magnitude while those of $A$ may be widely varying, as already exploited in [3] and [11].  In [6, Corollary 3.4], this is extended to the case that $A$ is non-singular while its polar factor and $E$ are both congruence transformed by $D$. We will show that $A$ being singular can also be taken into account by imposing one extra constraint on $D$, which vanishes when $A$ has full rank so [6, Corollary 3.4] is recovered.
\begin{theorem}
  Let $A$ be normal with rank $r\leq n$, an eigenvector matrix $V$ and polar factor $P$. If $\tilde{A}$ is normal with polar factor $\tilde{P}=DPD^*$, and $\tilde{E}=DED^*$ where $D$ is invertible such that $D^*D$ commute with $V_rV_r^*$, then \begin{equation}k=\|(A^{1/2})^+E(A^{1/2})^+\|=\|(\tilde{A}^{1/2})^+ \tilde{E}(\tilde{A}^{1/2})^+\|
       \end{equation}
    \end{theorem}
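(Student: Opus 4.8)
The plan is to reduce the claimed equality to an identity about polar factors, and then use the hypothesis on $D^*D$ to control how the pseudo-inverse interacts with the congruence $X\mapsto DXD^*$.

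First I would pass from $A,\tilde A$ to their polar factors. Repeating the derivation of (3)--(4) verbatim, but with the diagonal phases $(\lambda_j/|\lambda_j|)^{1/2}$ replacing the signs $(\mathrm{sign}\,\lambda_j)^{1/2}$ (the only change needed when $A$ is normal rather than Hermitian), produces a unitary $W$ with $(P^{1/2})^+=W(A^{1/2})^+=(A^{1/2})^+W$, whence $k=\|(A^{1/2})^+E(A^{1/2})^+\|=\|(P^{1/2})^+E(P^{1/2})^+\|$. Applying the same to $\tilde A$, whose polar factor is $\tilde P=DPD^*$, gives $\|(\tilde A^{1/2})^+\tilde E(\tilde A^{1/2})^+\|=\|(\tilde P^{1/2})^+\tilde E(\tilde P^{1/2})^+\|$. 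So it suffices to prove $\|(P^{1/2})^+E(P^{1/2})^+\|=\|(\tilde P^{1/2})^+\tilde E(\tilde P^{1/2})^+\|$, where now $\tilde P=DPD^*$, $\tilde E=DED^*$, and $P=V_r|D_r|V_r^*$ so that $PP^+=P^+P=V_rV_r^*$ and $(P^{1/2})^+=(P^+)^{1/2}$.

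Second, I would show the hypothesis forces the pseudo-inverse to commute with the congruence: $(DPD^*)^+=D^{-*}P^+D^{-1}$. One checks the four Moore--Penrose conditions for $Q:=D^{-*}P^+D^{-1}$ directly: the two "product" conditions $(DPD^*)Q(DPD^*)=DPD^*$ and $Q(DPD^*)Q=Q$ are immediate from $PP^+P=P$, $P^+PP^+=P^+$ and invertibility of $D$, while the two Hermitian-symmetry conditions, after clearing $D$'s, reduce exactly to requiring that $PP^+=P^+P=V_rV_r^*$ commute with $D^*D$ — which is the assumption. Consequently the orthogonal projection onto $\mathrm{range}(\tilde P)$ is $\tilde\Pi:=\tilde P\tilde P^+=DPP^+D^{-1}=D(V_rV_r^*)D^{-1}$; in particular $V_rV_r^*=D^{-1}\tilde\Pi D$, and $(\tilde P^{1/2})^+=(\tilde P^+)^{1/2}$ satisfies $(\tilde P^{1/2})^+\tilde\Pi=(\tilde P^{1/2})^+$.

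Third, I would set $W:=(\tilde P^{1/2})^+DP^{1/2}$ and verify it is a partial isometry with initial space $\mathrm{col}(V_r)$ and final space $\mathrm{range}(\tilde P)$: using $(DPD^*)^+=D^{-*}P^+D^{-1}$,
\[
W^*W=P^{1/2}D^*\tilde P^+DP^{1/2}=P^{1/2}P^+P^{1/2}=V_rV_r^*,\qquad
WW^*=(\tilde P^{1/2})^+(DPD^*)(\tilde P^{1/2})^+=(\tilde P^{1/2})^+\tilde P(\tilde P^{1/2})^+=\tilde\Pi .
\]
Moreover $W(P^{1/2})^+=(\tilde P^{1/2})^+D(V_rV_r^*)=(\tilde P^{1/2})^+D(D^{-1}\tilde\Pi D)=(\tilde P^{1/2})^+\tilde\Pi D=(\tilde P^{1/2})^+D$, and taking adjoints $D^*(\tilde P^{1/2})^+=(P^{1/2})^+W^*$. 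Substituting, $(\tilde P^{1/2})^+\tilde E(\tilde P^{1/2})^+=(\tilde P^{1/2})^+D\,E\,D^*(\tilde P^{1/2})^+=W\big[(P^{1/2})^+E(P^{1/2})^+\big]W^*$. Writing $X:=(P^{1/2})^+E(P^{1/2})^+$, we have $X=V_rV_r^*\,X\,V_rV_r^*=(W^*W)X(W^*W)$, so $W^*(WXW^*)W=X$ and hence $\|WXW^*\|=\|X\|$ by submultiplicativity (partial isometries have norm $\le1$). This yields $\|(\tilde P^{1/2})^+\tilde E(\tilde P^{1/2})^+\|=\|(P^{1/2})^+E(P^{1/2})^+\|=k$, which is (22).

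The step I expect to be the obstacle is the second one: in the non-singular case one simply writes $\tilde P^{-1/2}DP^{1/2}$ for a unitary and inverts $P$ outright, whereas here one must extract precisely the algebraic content of "$D^*D$ commutes with $V_rV_r^*$" that makes $(DPD^*)^+=D^{-*}P^+D^{-1}$ valid, and carry through the fact that $(\tilde P^{1/2})^+$ inverts $\tilde P$ only on its range. Note that when $r=n$ one has $V_rV_r^*=I$, the commuting condition is vacuous, $W$ is genuinely unitary, and the whole argument collapses to [6, Corollary 3.4].
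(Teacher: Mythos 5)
Your proposal is correct, and it shares the paper's overall strategy --- reduce to polar factors via (4), then extract from the hypothesis $D^*DV_rV_r^*=V_rV_r^*D^*D$ a reverse-order law for a pseudo-inverse of a product and use it to intertwine the two norms --- but the intertwining is engineered differently. The paper observes that $\tilde{P}=\tilde{P}^{1/2}(\tilde{P}^{1/2})^*=(DP^{1/2})(DP^{1/2})^*$, matches the two SVDs to write $\tilde{P}^{1/2}=(DP^{1/2})W$ with $W$ genuinely unitary, and proves the reverse-order law at the level of the square root, $(DP^{1/2})^+=(P^{1/2})^+D^{-1}$, by the same Moore--Penrose verification you perform (the two symmetry conditions reducing exactly to the commutation hypothesis); Hermitian-ness of $(\tilde{P}^{1/2})^+$ then gives the two-sided representation $(\tilde{P}^{1/2})^+=W^*(P^{1/2})^+D^{-1}=D^{-*}(P^{1/2})^+W$, so the $D$'s cancel against $\tilde{E}=DED^*$ and the unitary $W$ drops out of the norm immediately. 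You instead prove the reverse-order law for $P$ itself, $(DPD^*)^+=D^{-*}P^+D^{-1}$, and construct the intertwiner $W=(\tilde{P}^{1/2})^+DP^{1/2}$ directly as a partial isometry with initial projection $V_rV_r^*$ and final projection $\tilde{\Pi}$; the price is the extra (and correctly supplied) observation that $X=(P^{1/2})^+E(P^{1/2})^+$ is supported on the initial space, so that $\|WXW^*\|=\|X\|$ in both directions. Your route avoids the SVD-matching step and makes explicit exactly where the rank-deficiency and the commutation hypothesis enter; the paper's route buys a genuine unitary, which makes the final norm equality one line. Both arguments use the hypothesis with identical algebraic content, and both correctly collapse to [6, Corollary 3.4] when $r=n$.
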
 
    \begin{proof}
     Since $\Tilde{P}^{1/2}, P^{1/2}$ are Hermitian, $\Tilde{P}=\Tilde{P}^{1/2}\Tilde{P}^{1/2}=\Tilde{P}^{1/2}(\Tilde{P}^{1/2})^*$ and \begin{equation}
\Tilde{P}=DPD^*=DP^{1/2}P^{1/2}D^*=DP^{1/2}(P^{1/2})^*D^*=DP^{1/2}(DP^{1/2})^*
        \end{equation}
        $\Tilde{P}^{1/2}(\Tilde{P}^{1/2})^*$ and $DP^{1/2}(DP^{1/2})^*$ have the same spectral decomposition $U\Lambda U^*$ with the eigenvalues in decreasing order.  Then the SVD of $\Tilde{P}^{1/2}$ is $\Tilde{P}^{1/2}=U\Sigma Q_1^*$ where $Q_1$ is unitary, and the SVD of $DP^{1/2}$ is $DP^{1/2}=U\Sigma Q_2^*$ where $Q_2$ is unitary. Hence, $\Tilde{P}^{1/2}=(DP^{1/2})W$ where $W=Q_2Q_1^*$ is unitary. Taking pseudo inverse both sides, we get $(\Tilde{P}^{1/2})^+=((DP^{1/2})W)^+=W^*(DP^{1/2})^+$. 

Consider $(DP^{1/2})^+$. If $A=V_r\hat{\Lambda}_rV_r^*$ is its thin spectral decomposition, $P=V_r|\hat{\Lambda}_r|V_r^*$ and $P^+=V_r|\hat{\Lambda}_r|^{-1}V_r^*$ so we have $P^{1/2}(P^{1/2})^+=V_r|\hat{\Lambda}_r|^{1/2}V_r^*V_r|\hat{\Lambda}_r|^{-1/2}V_r^*=V_rV_r^*$. As a result, 
 \begin{equation}
 \begin{aligned}
    (DP^{1/2})(P^{1/2})^+D^{-1}=DV_rV_r^*D^{-1}
    \end{aligned}
    \end{equation}Since $D^*DV_rV_r^*=V_rV_r^*D^*D$ by the condition given, multiplying its both sides by $D^{-*}$ on the left and $D^{-1}$ on the right, we get \begin{equation*} 
\begin{aligned}
    DV_rV_r^*D^{-1}=D^{-*}V_rV_r^*D^*
    =(DV_rV_r^*D^{-1})^*
     \end{aligned}
\end{equation*}
Hence, $(DP^{1/2})(P^{1/2})^+D^{-1}$ is Hermitian by (24). On the other hand, 
\begin{equation*}
    (P^{1/2})^+D^{-1}(DP^{1/2})(P^{1/2})^+D^{-1}=(P^{1/2})^+P^{1/2}(P^{1/2})^+D^{-1}=(P^{1/2})^+D^{-1}
\end{equation*}
 \begin{equation*}  
   DP^{1/2}((P^{1/2})^+D^{-1})DP^{1/2}=DP^{1/2}(P^{1/2})^+P^{1/2}=DP^{1/2}
    \end{equation*}
    \begin{equation*}
    (P^{1/2})^+D^{-1}(DP^{1/2})=(P^{1/2})^+P^{1/2}=V_r|\hat{\Lambda}_r|^{-1/2}|\hat{\Lambda}_r|^{1/2}V_r^*=V_rV_r^*
\end{equation*}
Therefore, $(DP^{1/2})^+=(P^{1/2})^+D^{-1}$. Since $(\Tilde{P}^{1/2})^+=W^*(P^{1/2})^+D^{-1}$ and $(\Tilde{P}^{1/2})^+$, $(P^{1/2})^+$ are Hermitian,  $(\Tilde{P}^{1/2})^+=(W^*(P^{1/2})^+D^{-1})^*=D^{-*}(P^{1/2})^+W$. Then \begin{equation}
    \begin{aligned}
        k=\|(\Tilde{A}^{1/2})^+\Tilde{E}(\Tilde{A}^{1/2})^+\|&\overset{(4)}{=}\|(\tilde{P}^{1/2})^+\Tilde{E}(\tilde{P}^{1/2})^+\|\\
        &=\|(W^*(P^{1/2})^+D^{-1})(DED^*)(D^{-*}(P^{1/2})^+W)\|\\
        &=\|W^*(P^{1/2})^+E(P^{1/2})^+W\|\\
        &=\|(P^{1/2})^+E(P^{1/2})^+\|\overset{(4)}{=}\|(A^{1/2})^+E(A^{1/2})^+\|
    \end{aligned}
\end{equation}
\end{proof}
\begin{remark}
\textnormal{As shown in the proof of Theorem 3.1, the condition that $D^*D$ commutes with $V_rV_r^*$ is sufficient to imply $(DP^{1/2})^+=(P^{1/2})^+D^{-1}$, which means the pseudo inverse of $DP^{1/2}$ follows the reverse order law. More generally for any matrices $X, Y$, $(XY)^+=Y^+X^+$ if and only if $X^+X$ commutes with $YY^*$ and $YY^+$ commutes with $X^*X$, as suggested by [2, Theorem 3.1] and [9, Corollary 3.11].}
\end{remark}
We can combine Theorem 3.1 with Theorem 2.1 to extend [8, Corollary 2.2] from the case $A$ being positive definite to $A$ being positive semi-definite and suggest the relative bounds of (1) and (2) for $A+E$ are the same as its congruence transformation $D(A+E)D^*$ under certain conditions on $D$. \begin{corollary}
    Let $\Tilde{A}=DAD^*$ be positive semi-definite with rank $s\leq n$ and eigenvalues $\lambda_1\geq...\geq \lambda_s, \lambda_{s+1}=...=\lambda_n=0$. Let $\Tilde{A}+\Tilde{E}$ be also positive semi-definite with eigenvalues in decreasing order, where $\Tilde{E}=DED^*$.  If $k=\|(A^{1/2})^{+} E(A^{1/2})^{+}\|$, and $D$ is invertible with $D^*DV_rV_r^*=V_rV_r^*D^*D$ where $V$ is an eigenvector matrix of $A$ which is of rank $r$, we have for any $i\in \{1,...,s\},$
\begin{equation*}\begin{array}{cc}
\lambda_{n-s+i}(\Tilde{A}+\Tilde{E})\leq \lambda_i+k|\lambda_i|, & \lambda_i(\Tilde{A}+\Tilde{E}) \geq \lambda_i -k|\lambda_i|
\end{array}
\end{equation*}\end{corollary}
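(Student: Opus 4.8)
The plan is to obtain the result by composing Theorem 3.1 with Corollary 2.1.1, so the work lies mostly in checking that the hypotheses of both are available. First I would record two consequences of $\tilde{A} = DAD^*$ being positive semi-definite with $D$ invertible. Taking adjoints, $D A^* D^* = \tilde{A}^* = \tilde{A} = D A D^*$, and cancelling $D$ on the left and $D^*$ on the right gives $A^* = A$, so $A$ is Hermitian. Then $\tilde{A}$ is a congruence of $A$ by the invertible matrix $D$, so by Sylvester's law of inertia $A$ and $\tilde{A}$ have the same inertia; since $\tilde{A}$ is positive semi-definite, so is $A$, and $\mbox{rank}(A) = \mbox{rank}(\tilde{A})$, i.e.\ $r = s$. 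In particular all eigenvalues of $A$ and of $\tilde{A}$ are non-negative.

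Next I would identify the polar factors. For a positive semi-definite Hermitian matrix $M$ the polar factor $(M^*M)^{1/2} = (M^2)^{1/2}$ is $M$ itself; hence the polar factor of $A$ is $P = A$ and the polar factor of $\tilde{A}$ is $\tilde{A}$. Consequently $\tilde{A}$ has polar factor $\tilde{P} = \tilde{A} = D A D^* = D P D^*$, which is exactly the relation demanded by Theorem 3.1. Also $\tilde{E} = DED^* = (\tilde{A}+\tilde{E}) - \tilde{A}$ is a difference of Hermitian matrices, hence Hermitian.

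Now every hypothesis of Theorem 3.1 is in place: $A$ is normal of rank $r$ with eigenvector matrix $V$ and polar factor $P$; $\tilde{A}$ is normal with polar factor $DPD^*$; $\tilde{E} = DED^*$; and $D$ is invertible with $D^*D$ commuting with $V_rV_r^*$. Theorem 3.1 therefore gives
\begin{equation*}
k = \|(A^{1/2})^{+} E (A^{1/2})^{+}\| = \|(\tilde{A}^{1/2})^{+} \tilde{E} (\tilde{A}^{1/2})^{+}\| .
\end{equation*}
I would then apply Corollary 2.1.1 to the pair $(\tilde{A}, \tilde{E})$, for which the hypotheses also hold: $\tilde{A}$ is positive semi-definite of rank $s$ with eigenvalues $\lambda_1 \ge \dots \ge \lambda_s > 0 = \lambda_{s+1} = \dots = \lambda_n$, $\tilde{A} + \tilde{E}$ is positive semi-definite with eigenvalues in decreasing order, $\tilde{E}$ is Hermitian, and the constant appearing in that application is precisely $\|(\tilde{A}^{1/2})^{+}\tilde{E}(\tilde{A}^{1/2})^{+}\| = k$. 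Corollary 2.1.1 yields, for each $i \in \{1,\dots,s\}$,
\begin{equation*}
\lambda_{n-s+i}(\tilde{A}+\tilde{E}) \le (1+k)\lambda_i, \qquad \lambda_i(\tilde{A}+\tilde{E}) \ge (1-k)\lambda_i .
\end{equation*}
Since the eigenvalues $\lambda_i$ of $\tilde{A}$ are non-negative, $(1 \pm k)\lambda_i = \lambda_i \pm k|\lambda_i|$, and the two asserted inequalities follow.

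I expect the only delicate point to be the verification that $\tilde{A}$ has polar factor exactly $DPD^*$, which is what licenses the use of Theorem 3.1; this hinges on $A$, and therefore $\tilde{A}$, being positive semi-definite, a fact obtained from Sylvester's law of inertia, after which each polar factor coincides with the matrix itself. The rest is routine: matching the remaining hypotheses of Theorem 3.1 and of Corollary 2.1.1, and using non-negativity of the eigenvalues of $\tilde{A}$ to rewrite $(1 \pm k)\lambda_i$ as $\lambda_i \pm k|\lambda_i|$.
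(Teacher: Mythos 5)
Your proposal is correct and follows essentially the same route as the paper: apply Corollary 2.1.1 to $(\Tilde{A},\Tilde{E})$ and then use Theorem 3.1 to replace $\Tilde{k}=\|(\Tilde{A}^{1/2})^+\Tilde{E}(\Tilde{A}^{1/2})^+\|$ by $k$, the key observation in both being that a positive semi-definite matrix is its own polar factor so that $\Tilde{P}=DPD^*$ holds. Your extra verifications (Hermiticity of $A$ via cancellation, equality of ranks via Sylvester's law of inertia) are details the paper leaves implicit but are consistent with its argument.
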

\begin{proof}
We first apply Corollary 2.1.1 to $\Tilde{A}$ and $\Tilde{E}$ and obtain (14) for $i\in \{1,..,s\}$ with $\Tilde{k}=\|(\Tilde{A}^{1/2})^+\Tilde{E}(\Tilde{A}^{1/2})^+\|$. 
 Since $\Tilde{A}$ is positive semi-definite, its polar factor is itself. $A=D^{-1}\Tilde{A}D^{-*}$ is also positive semi-definite so its polar factor is itself too. Since $\Tilde{A}=DAD^*$, we can apply Theorem 3.1 to replace $\Tilde{k}$ by $k$ and get the result. 
 \end{proof}
In Theorem 3.1, when $A$ has full rank, the condition of $D^*D$ commuting with $V_rV_r^*$ becomes $D^*D$ commuting with $VV^*=I$ which holds for any $D$. Hence this extra constraint on $D$ compared to [6, Corollary 3.4] and [8, Corollary 2.2] vanishes so $D$ can be any invertible matrix. Hence, Corollary 3.1.1 recovers [8, Corollary 2.2] when $A$ is positive definite. When $r<n$, the constraint is active and $D$ must be such that $D^*D$ commute with $V_rV_r^*$. By [7, Theorem 2.5.5], this implies that $D^*D$ is diagonisable through $V$ as $D^*D=V\Lambda V^*$. Using [7, Theorem 7.3.2], if $D=U\Sigma W^*$ as its SVD, $W=VS$ where $S$ is some permutation matrix such that $S\Sigma S^*=\Lambda$.  

\subsection{Comparison to Weyl's absolute bound}
We have derived a relative perturbation bound giving (1) and (2) in Theorem 2.1. It is natural to ask under what circumstance the relative bound can be sharper than the Weyl's absolute  bound. If we first assume that the Hermitian matrix $A$ has full rank, then Theorem 2.1 coincides with the main theorem in [4], which is for $E$ being Hermitian, $i\in\{1,...,n\}$ ,  and $k=\|A^{-1/2}EA^{-1/2}\|\leq 1$, \begin{equation}
  |\lambda_i(A+E)-\lambda_i(A)|\leq k|\lambda_i(A)|
\end{equation}
where the orderings of $\lambda_i(A)$ and $ \lambda_i(A+E)$ are decreasing as before. On the other hand, the Weyl's absolute bound under the same context is\begin{equation}
    |\lambda_i(A+E)-\lambda_i(A)|\leq \|E\|
     \end{equation} 
To have (26) being sharper than (27) for the same index $i$ is equivalent to have $k|\lambda_i(A)|\leq \|E\|$, which will depend on the specific $i$ of interest. We will try to give a sufficient condition for it using the Ostrowski's bound [5, Theorem 2.1].

\begin{theorem}
    
Let $A$ be Hermitian non-singular with polar factor $P$, and $j'$ be any index such that $|\lambda_{j'}(P^{-1/2}EP^{-1/2})|=\max\limits_j|\lambda_j(P^{-1/2}EP^{-1/2})|$. Let $E$ be Hermitian. If for any fixed $i\in \{1,...,n\}$, \begin{equation}
    \frac{|\lambda_i(A)|}{\min\limits_l|\lambda_l(A)|}|\lambda_{j'}(E)|\leq \|E\|
\end{equation}
then we have $k|\lambda_i(A)|\leq \|E\|$ where $k=\|A^{-1/2}EA^{-1/2}\|$. \end{theorem}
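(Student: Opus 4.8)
The plan is to reduce $k=\|A^{-1/2}EA^{-1/2}\|$ to a quantity controlled by $|\lambda_{j'}(E)|$ by way of Ostrowski's theorem, and then to invoke the hypothesis (28) directly. First I would note that, since $A$ is non-singular, $r=n$, so $(A^{1/2})^+=A^{-1/2}$ and $(P^{1/2})^+=P^{-1/2}$; equation (4) then gives $k=\|A^{-1/2}EA^{-1/2}\|=\|P^{-1/2}EP^{-1/2}\|$. The matrix $P^{-1/2}EP^{-1/2}$ is Hermitian, so $k=\max_j|\lambda_j(P^{-1/2}EP^{-1/2})|=|\lambda_{j'}(P^{-1/2}EP^{-1/2})|$ for precisely the index $j'$ singled out in the statement.

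Next I would apply Ostrowski's theorem [5, Theorem 2.1] to the congruence $P^{-1/2}E(P^{-1/2})^*=P^{-1/2}EP^{-1/2}$ (legitimate because $P^{-1/2}$ is Hermitian and invertible): for every $j$ there is a scalar $\theta_j$ with $\lambda_n(P^{-1})\le\theta_j\le\lambda_1(P^{-1})$ and $\lambda_j(P^{-1/2}EP^{-1/2})=\theta_j\lambda_j(E)$. Writing $A=VDV^*$, the polar factor is $P=V|D|V^*$, hence $P^{-1}=V|D|^{-1}V^*$ has eigenvalues $1/|\lambda_l(A)|$; in particular $\lambda_1(P^{-1})=1/\min_l|\lambda_l(A)|$ and every $\theta_j$ is positive, so $0<\theta_{j'}\le 1/\min_l|\lambda_l(A)|$.

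Putting these together, $k=|\lambda_{j'}(P^{-1/2}EP^{-1/2})|=\theta_{j'}|\lambda_{j'}(E)|\le |\lambda_{j'}(E)|/\min_l|\lambda_l(A)|$, whence $k|\lambda_i(A)|\le \dfrac{|\lambda_i(A)|}{\min_l|\lambda_l(A)|}\,|\lambda_{j'}(E)|\le\|E\|$ by the assumption (28), which is the desired conclusion.

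I do not expect a genuine obstacle here; the one thing needing care is that the index $j'$ must play two roles simultaneously — it realises $\max_j|\lambda_j(P^{-1/2}EP^{-1/2})|$ and it is the index at which Ostrowski's multiplicative identity $\lambda_{j'}(P^{-1/2}EP^{-1/2})=\theta_{j'}\lambda_{j'}(E)$ is used. This causes no problem because Ostrowski's theorem provides the relation index-by-index, so it is valid for whichever $j'$ we have fixed. A secondary bookkeeping point is to check that the eigenvalue-ordering convention in [5] matches the decreasing ordering used here, so that the bound $\lambda_n(P^{-1})\le\theta_j\le\lambda_1(P^{-1})$ is stated with the correct extremes.
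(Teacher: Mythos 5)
Your proposal is correct and follows essentially the same route as the paper: both identify $k=\|P^{-1/2}EP^{-1/2}\|=|\lambda_{j'}(P^{-1/2}EP^{-1/2})|$ via the Hermitian structure and equation (4), invoke Ostrowski's theorem [5, Theorem 2.1] to relate $\lambda_{j'}(P^{-1/2}EP^{-1/2})$ to $\lambda_{j'}(E)$ with multiplier bounded by $\|P^{-1}\|=1/\min_l|\lambda_l(A)|$, and then chain this with hypothesis (28). The only cosmetic difference is that you apply Ostrowski to the congruence $P^{-1/2}E(P^{-1/2})^*$ while the paper applies it to $E=P^{1/2}(P^{-1/2}EP^{-1/2})(P^{1/2})^*$; the resulting inequality is identical.
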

\begin{proof}
 Let $A=VDV^*$ be its spectral decomposition. Then $P=V|D|V^*$ and $P^{-1/2}=V|D|^{-1/2}V^*$, both of which are Hermitian. As $E$ is Hermitian, so is $P^{-1/2}EP^{-1/2}$. Therefore, $|\lambda_{j'}(P^{-1/2}EP^{-1/2})|=\max\limits_j|\lambda_j(P^{-1/2}EP^{-1/2})|=\|P^{-1/2}EP^{-1/2}\|$. We can apply [5, Theorem 2.1] to $P^{-1/2}EP^{-1/2}$ and $E=P^{1/2}(P^{-1/2}EP^{-1/2})(P^{1/2})^*$ with the index being $j'$ together with $\|P^{-1}\|=\|A^{-1}\|=1/\min\limits_l|\lambda_l(A)|$ to obtain that
\begin{equation}
\begin{aligned}
    |\lambda_{j'}(E)|\geq \frac{|\lambda_{j'}(P^{-1/2}EP^{-1/2})|}{\|(P^{1/2}P^{1/2})^{-1}\|}=\frac{\|P^{-1/2}EP^{-1/2}\|}{\|P^{-1}\|}
    \overset{(4)}{=}k\min\limits_l|\lambda_l(A)|
    \end{aligned}
\end{equation}
 We substitute (29) into (28) which is given, and obtain that \begin{equation*}
\begin{aligned}
    \|E\|\overset{(28)}{\geq} |\lambda_{j'}(E)|\frac{|\lambda_i(A)|}{\min\limits_l|\lambda_l(A)|}
    \overset{(29)}{\geq} k\min\limits_l|\lambda_l(A)|\frac{|\lambda_i(A)|}{\min\limits_l|\lambda_l(A)|}=k|\lambda_i(A)|
     \end{aligned}
     \end{equation*}\end{proof}  
 
In general, it may depend on the index $i\in \{1,...,n\}$ of interest to have either $ k|\lambda_i(A)|\leq \|E\|$ or the contrary, but Theorem 3.2 ensures that there always exists some index $i\in \{1,...,n\}$  such that the relative bound in (26) is sharper than the Weyl's bound in (27) as illustrated in the following result:

\begin{theorem}
    Let $A$ be Hermitian non-singular and $E$ be any Hermitian matrix, if $k=\|A^{-1/2}EA^{-1/2}\|$, then there exists some index $i\in \{1,...,n\}$ such that \begin{equation*}
        |\lambda_i(A+E)-\lambda_i(A)|\leq k|\lambda_i(A)|\leq\|E\|
    \end{equation*}
\end{theorem}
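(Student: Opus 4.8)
The plan is to produce an explicit index rather than argue abstractly. Let $i^{\ast}$ be any index realising $|\lambda_{i^{\ast}}(A)|=\min\limits_l|\lambda_l(A)|$, the minimum being strictly positive since $A$ is non-singular. I claim $i=i^{\ast}$ works. The point is that (26) already supplies the estimate $|\lambda_i(A+E)-\lambda_i(A)|\leq k|\lambda_i(A)|$ at every index, so all that remains is to locate an index where $k|\lambda_i(A)|\leq\|E\|$, and that is the index where $|\lambda_i(A)|$ is smallest.

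For $k|\lambda_{i^{\ast}}(A)|\leq\|E\|$ I would apply Theorem 3.2 with the fixed index $i=i^{\ast}$: its hypothesis (28) then reads $\dfrac{|\lambda_{i^{\ast}}(A)|}{\min\limits_l|\lambda_l(A)|}\,|\lambda_{j'}(E)|=|\lambda_{j'}(E)|\leq\|E\|$, which is automatic because $E$ is Hermitian and hence $|\lambda_{j'}(E)|\leq\max\limits_j|\lambda_j(E)|=\|E\|$; Theorem 3.2 then yields $k|\lambda_{i^{\ast}}(A)|\leq\|E\|$. (One can also see this directly: $k=\|A^{-1/2}EA^{-1/2}\|\leq\|A^{-1}\|\,\|E\|=\|E\|/\min\limits_l|\lambda_l(A)|$ by submultiplicativity of the $2$-norm.) Combining with the instance $i=i^{\ast}$ of (26) gives $|\lambda_{i^{\ast}}(A+E)-\lambda_{i^{\ast}}(A)|\leq k|\lambda_{i^{\ast}}(A)|\leq\|E\|$, which is the assertion.

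There is no genuine obstacle here: the proof is a two-line combination of Theorems 2.1 and 3.2, and the only point requiring care is identifying $i^{\ast}$ as the eigenvalue of $A$ of smallest modulus. Note that the hypothesis $k\leq1$ is used only through the appeal to (26); it cannot be removed, since for $k>1$ the displayed chain can fail at every index simultaneously (for instance with $A=\mbox{diag}(1,\epsilon)$, $E$ antidiagonal and $\epsilon$ small), so the statement is to be read under the standing assumption $k\leq1$ inherited from (26).
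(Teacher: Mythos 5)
Your proof is correct and follows essentially the same route as the paper: take the index at which $|\lambda_i(A)|=\min_l|\lambda_l(A)|$, note that hypothesis (28) of Theorem 3.2 collapses to the trivial inequality $|\lambda_{j'}(E)|\leq\|E\|$ since $E$ is Hermitian, and combine the resulting $k|\lambda_i(A)|\leq\|E\|$ with (26). Your added caveat that $k\leq 1$ is implicitly needed for the first inequality (inherited from (26)) is a fair observation that the paper's statement leaves tacit, but it does not change the argument.
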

\begin{proof}
    Take $i\in\{1,...,n\}$ such that $|\lambda_i(A)|=\min\limits_l|\lambda_l(A)|$. Since  $\|E\|=\max\limits_j|\lambda_j(E)|$ because $E$ is Hermitian, for any $j'\in \{1,...,n\}$, \begin{equation*}
         \frac{|\lambda_i(A)|}{\min\limits_l|\lambda_l(A)|}|\lambda_{j'}(E)|=|\lambda_{j'}(E)|\leq \|E\|
    \end{equation*}
So (28) is satisfied. By Theorem 3.2, we have $|\lambda_i(A+E)-\lambda_i(A)|\leq k|\lambda_i(A)|\leq \|E\|$.

\end{proof}

More generally when $A$ has rank $r\leq n$ in Theorem 2.1, we want to compare \begin{equation}
    \lambda_{n-r+i}(A+E)\leq \lambda_i(A)+k|\lambda_i(A)|
\end{equation}
 with its corresponding Weyl's bound $\lambda_{n-r+i}(A+E)\leq \lambda_{n-r+i}(A)+\|E\|$,  and \begin{equation}
\lambda_i(A+E)\geq \lambda_i(A)-k|\lambda_i(A)|
 \end{equation} with its corresponding Weyl's bound $\lambda_i(A+E)\geq \lambda_i(A)-\|E\|$ for $i\in \{1,...,r\}$, when $A$ is potentially singular and $k=\|(A^{1/2})^+E(A^{1/2})^+\|$. 
 We can give a sufficient condition for the relative bounds in (30) and (31) being sharper than the Weyl's bounds, though it is less likely to be met as $\mbox{rank}(A)=r$ decreases away from $n$.   \begin{theorem}
    Let $A$ be Hermitian with rank $r\leq n$ and polar factor $P$. Let $j'$ be any index that $|\lambda_{j'}((P^{1/2})^+E(P^{1/2})^+)|=\max\limits_{j }|\lambda_j((P^{1/2})^+E(P^{1/2})^+)|$ where the eigenvalues are in the default ordering on page 2, and $k=\|(A^{1/2})^+E(A^{1/2})^+\|$. If for
     $i\in\{1,...,r\}$, \begin{equation}
\lambda_i(A)-\lambda_{n-r+i}(A)+\frac{\max \{|\lambda_{j'}(E)|, |\lambda_{n-r+j'}(E)| \}}{\min\limits_{l: \lambda_l(A)\neq 0}|\lambda_l(A)|}|\lambda_i(A)|\leq \|E\|
 \end{equation} 
 then $\lambda_{i}(A)+k|\lambda_i(A)|\leq \lambda_{n-r+i}(A)+\|E\|$ and $\lambda_{i}(A)-k|\lambda_i(A)|\geq \lambda_{i}(A)-\|E\|$. \end{theorem}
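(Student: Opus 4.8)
The plan is to follow the proof of Theorem 3.2 but to run the argument on the range of $A$ rather than on all of $\mathbb{C}^n$, after first separating out the dependence on the index $i$. The second claimed inequality, $\lambda_i(A)-k|\lambda_i(A)|\geq\lambda_i(A)-\|E\|$, is equivalent to $k|\lambda_i(A)|\leq\|E\|$, and the first, $\lambda_i(A)+k|\lambda_i(A)|\leq\lambda_{n-r+i}(A)+\|E\|$, rearranges to $k|\lambda_i(A)|\leq\|E\|-\lambda_i(A)+\lambda_{n-r+i}(A)$. For $i\in\{1,\dots,r\}$ one has $\lambda_i(A)\geq\lambda_{n-r+i}(A)$ (which follows by comparing, according to the signs of the eigenvalues of $A$, the inertia order used for $\lambda_i(A)$ with the decreasing order used in the Weyl bound), so the right-hand side of the first inequality is at most $\|E\|$ and hence the first implies the second. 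Therefore both conclusions follow from hypothesis (32) once I establish the single, index-free estimate
\begin{equation*}
k \;\leq\; \frac{\max\{\,|\lambda_{j'}(E)|,\; |\lambda_{n-r+j'}(E)|\,\}}{\min_{l:\,\lambda_l(A)\neq 0}|\lambda_l(A)|},
\end{equation*}
since multiplying this by $|\lambda_i(A)|$ and invoking (32) gives exactly $k|\lambda_i(A)|\leq\|E\|-\lambda_i(A)+\lambda_{n-r+i}(A)$.

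To prove the estimate, write $A=VDV^*=V_rD_rV_r^*$. By (4) we have $k=\|(P^{1/2})^{+}E(P^{1/2})^{+}\|$, and since $(P^{1/2})^{+}=V_r|D_r|^{-1/2}V_r^*$ this matrix equals $V_rMV_r^*$ with $M:=|D_r|^{-1/2}(V_r^*EV_r)|D_r|^{-1/2}$ an $r\times r$ Hermitian matrix; because $V_r$ has orthonormal columns, $\|V_rMV_r^*\|=\|M\|$, so $k=\|M\|$. Now $V_r^*EV_r=|D_r|^{1/2}M|D_r|^{1/2}$ is an honest nonsingular congruence of $M$, with $\|\,|D_r|^{-1}\,\|=1/\min_{l:\,\lambda_l(A)\neq 0}|\lambda_l(A)|$. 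Picking an index $m$ at which $|\lambda_m(M)|=\|M\|=k$ and applying the Ostrowski bound [5, Theorem 2.1] to $M$ with this congruence yields $|\lambda_m(V_r^*EV_r)|\geq k\min_{l:\,\lambda_l(A)\neq 0}|\lambda_l(A)|$. Finally, $V_r^*EV_r$ is the $r\times r$ principal submatrix of $V^*EV$, which has the same eigenvalues as $E$, so Cauchy interlacing [7, Theorem 4.3.28] gives $\lambda_{n-r+m}(E)\leq\lambda_m(V_r^*EV_r)\leq\lambda_m(E)$, hence $|\lambda_m(V_r^*EV_r)|\leq\max\{|\lambda_m(E)|,|\lambda_{n-r+m}(E)|\}$. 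Dividing by $\min_{l:\,\lambda_l(A)\neq 0}|\lambda_l(A)|$ produces the estimate, provided the index $m$ can be taken to be the index $j'$ of the statement.

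I expect the main obstacle to be precisely this last identification. The index $j'$ is fixed through the default (inertia) ordering of $(P^{1/2})^{+}E(P^{1/2})^{+}=V_rMV_r^*$, which may itself be rank-deficient, whereas Ostrowski and Cauchy interlacing pair eigenvalues in the decreasing orderings of the $r\times r$ matrices $M$ and $V_r^*EV_r$. If the extremal eigenvalue of $M$ is positive it coincides with $\lambda_1(M)$ and also occupies position $1$ of the inertia ordering of $V_rMV_r^*$, so $m=j'=1$ and the argument above closes verbatim; if it is negative one must track carefully where it sits relative to the zero eigenvalues of $V_rMV_r^*$ and align this with the larger of $|\lambda_{j'}(E)|$ and $|\lambda_{n-r+j'}(E)|$ through the interlacing inequalities. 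I would therefore split the proof into these two sign cases — the positive one being immediate, the negative one requiring the careful index count together with the observation that $\mbox{rank}((P^{1/2})^{+}E(P^{1/2})^{+})\leq r$.
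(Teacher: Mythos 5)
Your proposal follows the paper's proof essentially step for step: the same reduction of both conclusions to $k|\lambda_i(A)|\leq\|E\|-\lambda_i(A)+\lambda_{n-r+i}(A)$, the same identification $k=\|\,|D_r|^{-1/2}V_r^*EV_r|D_r|^{-1/2}\|$ (the paper gets it from $\mbox{eig}(XY)=\mbox{eig}(YX)$ rather than from $\|V_rMV_r^*\|=\|M\|$, which is equivalent), the same application of the Ostrowski bound [5, Theorem 2.1] to the congruence $V_r^*EV_r=|D_r|^{1/2}M|D_r|^{1/2}$, and the same use of Cauchy interlacing [7, Theorem 4.3.28] on $V_r^*EV_r$ viewed as the $r\times r$ principal submatrix of $V^*EV$. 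The only divergence is at the end: the paper does not split on the sign of the extremal eigenvalue. It observes that $\mbox{rank}((P^{1/2})^+E(P^{1/2})^+)\leq r$ forces $j'\in\{1,\dots,r\}$ (with a footnote disposing of the degenerate case $(P^{1/2})^+E(P^{1/2})^+=\mathbf{0}$) and then applies both lemmas at that single index $j'$. The observation that closes the identification you defer is Sylvester's law of inertia: since $|D_r|^{1/2}$ is nonsingular, $M$ and $V_r^*EV_r$ have identical inertia, so the permutation relating the decreasing ordering to the default inertia ordering is the same for both matrices and the Ostrowski pairing at index $j'$ is unambiguous; your index $m$ is $j'$ by the definition of $j'$. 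With that one remark your "negative case" collapses into the same argument as your "positive case," and the proof is the paper's.
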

\begin{proof}
Let $A=V_rD_rV_r^*$ be its thin spectral decomposition, then $P=V_r|D_r|V_r^*$, and $(P^{1/2})^+=(V_r|D_r|^{1/2}V_r^*)^+=V_r|D_r|^{-1/2}V_r^*$. Since $(P^{1/2})^+E(P^{1/2})^+$ is Hermitian because $(P^{1/2})^+$ and $E$ are Hermitian, we have\begin{equation*}
   k=\|(A^{1/2})^+E(A^{1/2})^+\|\overset{(4)}{=}\|(P^{1/2})^+E(P^{1/2})^+\|=|\lambda_{j'}((P^{1/2})^+E(P^{1/2})^+)|  
   \end{equation*}
   Since $\mbox{rank}((P^{1/2})^+E(P^{1/2})^+)\leq \mbox{rank}((P^{1/2})^+)=r$,  the index $j'\in \{1,...,r\}$, because $\lambda_{j}((P^{1/2})^+E(P^{1/2})^+)=0$ for any $j>r$, assuming $(P^{1/2})^+E(P^{1/2})^+\neq \mathbf{0}$.\footnote{If $(P^{1/2})^+E(P^{1/2})^+=V_r|D_r|^{-1/2}V_r^*EV_r|D_r|^{-1/2}V_r^*=\mathbf{0}$, multiplying both sides by $|D_r|^{1/2}V_r^*$ on the left and $V_r|D_r|^{1/2}$ on the right we get $V_r^*EV_r=\mathbf{0}$. We can still take $j'\in \{1,...,r\}$ since all the eigenvalues are zero and the rest of the deviation will hold true. } Then for such $j'\in \{1,...,r\}$, using $\mbox{eig}(XY)=\mbox{eig}(YX)$ [7, Theorem 1.3.22],
\begin{equation}
\begin{aligned}
   k= \|(P^{1/2})^+E(P^{1/2})^+\|&=|\lambda_{j'}((P^{1/2})^+E(P^{1/2})^+)|\\
    &=|\lambda_{j'}(\underbrace{V_r}_{X}\underbrace{|D_r|^{-1/2}V_r^*EV_r|D_r|^{-1/2}V_r^*}_{Y})| \\
    &=|\lambda_{j'}(|D_r|^{-1/2}V_r^*EV_r|D_r|^{-1/2}V_r^*V_r)| \\ &=|\lambda_{j'}(|D_r|^{-1/2}V_r^*EV_r|D_r|^{-1/2})|  \\
        \end{aligned}
\end{equation}
On the other hand, since $V_r^*EV_r=|D_r
|^{1/2}(|D_r|^{-1/2}V_r^*EV_r|D_r|^{-1/2})|D_r|^{1/2}$ where $|D_r|^{1/2}$ is Hermitian non-singular, we can apply [5, Theorem 2.1] to $V_r^*EV_r$  and $|D_r|^{-1/2}V_r^*EV_r|D_r|^{-1/2}$ at index $j'\in \{1,...,r\}$, obtaining that
\begin{equation}
       |\lambda_{j'}(V_r^*EV_r)|\geq \frac{|\lambda_{j'}(|D_r|^{-1/2}V_r^*EV_r|D_r|^{-1/2})|}{\|(|D_r|^{1/2}|D_r|^{1/2})^{-1}\|}
       =\frac{|\lambda_{j'}(|D_r|^{-1/2}V_r^*EV_r|D_r|^{-1/2})|}{\||D_r|^{-1}\|}
       \end{equation}
We substitute (33) and that $\||D_r|^{-1}\|=1/\min\limits_{l: \lambda_l(A)\neq 0}|\lambda_l(A)|$ into (34), and get \begin{equation} 
    |\lambda_{j'}(V_r^*EV_r)|\geq \|(P^{1/2})^+E(P^{1/2})^+\|\min\limits_{l: \lambda_l(A)\neq 0}|\lambda_l(A)|=k\min\limits_{l: \lambda_l(A)\neq 0}|\lambda_l(A)|
    \end{equation}
By [7, Theorem 4.3.28], and (7) but with $E$, since $j'\in \{1,...,r\}$, \begin{equation*}
\lambda_{n-r+j'}(E)=\lambda_{n-r+j'}(V^*EV)\leq \lambda_{j'}(V_r^*EV_r)\leq\lambda_{j'}(V^*EV)=\lambda_{j'}(E)
   \end{equation*} which means that $|\lambda_{j'}(V_r^*EV_r)|\leq \max\{|\lambda_{j'}(E)|, |\lambda_{n-r+j'}(E)| \} $. Then we substitute this and (35) into (32) which is given and get\begin{equation*}
   \begin{aligned}
       \|E\|&\overset{(32)}{\geq }\lambda_i(A)-\lambda_{n-r+i}(A)+\frac{\max \{|\lambda_{j'}(E)|, |\lambda_{n-r+j'}(E)| \}}{\min\limits_{l: \lambda_l(A)\neq 0}|\lambda_l(A)|}|\lambda_i(A)| \\
       &\geq\lambda_i(A)-\lambda_{n-r+i}(A)+\frac{|\lambda_{j'}(V_r^*EV_r)|}{\min\limits_{l: \lambda_l(A)\neq 0}|\lambda_l(A)|}|\lambda_i(A)|  \\
       &\overset{(35)}{\geq}\lambda_i(A)-\lambda_{n-r+i}(A)+k|\lambda_i(A)|        \end{aligned}      
       \end{equation*}
       which means that $\lambda_{i}(A)+k|\lambda_i(A)|\leq \lambda_{n-r+i}(A)+\|E\|$. Also, since $\lambda_{n-r+i}(A)\leq \lambda_i(A)$, we have $k|\lambda_i(A)|\leq \|E\|$ so $\lambda_i(A)-k|\lambda_i(A)|\geq \lambda_i(A)-\|E\|$. As a result, the relative bounds in (30) and (31) are sharper than their corresponding Weyl's bounds. \end{proof}
   
     Theorem 3.4 generalises Theorem 3.2 to $A$ having rank $r\leq n$. We will illustrate when Theorem 3.4 could be useful in the next result which is an extension of Theorem 3.3, and it also provides justification that in some cases we should consider to use Theorem 2.1 rather than the Weyl's absolute bound. 
    
       \begin{corollary}
           Let $A$ be Hermitian of rank $r\leq n$ with eigenvalues $\lambda_1\geq...\geq \lambda_r, \lambda_{r+1}=...=\lambda_n=0$. Let $E$ be Hermitian, and $k=\|(A^{1/2})^+E(A^{1/2})^+\|$. If the eigenvalue with the smallest non-zero absolute value of $A$ has at least $n-r+1$ algebraic multiplicity, then there exists some $i\in \{1,...,r\}$ such that 
\begin{equation*}
\begin{array}{cc}
\lambda_{i}+k|\lambda_i|\leq \lambda_{n-r+i}+\|E\|, &  \lambda_i-k\|\lambda_i\|\geq \lambda_i-\|E\|\\
\end{array}
    \end{equation*} 
       \end{corollary}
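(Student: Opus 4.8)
The plan is to deduce the statement directly from Theorem 3.4: it suffices to exhibit a single index $i\in\{1,\dots,r\}$ for which hypothesis (32) holds, since the two displayed inequalities are exactly the conclusion of Theorem 3.4. The role of the multiplicity assumption is to let us choose $i$ so that the ``gap'' $\lambda_i(A)-\lambda_{n-r+i}(A)$ appearing in (32) vanishes while at the same time $\lambda_i(A)$ realises the smallest non-zero absolute value of $A$; this is precisely what makes the fraction in (32) collapse to a quantity that is obviously at most $\|E\|$.

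First I would do the index bookkeeping. Write $\mu$ for the eigenvalue of $A$ of smallest non-zero modulus and let $m\ge n-r+1$ be its algebraic multiplicity. Since $\mu\neq 0$, all $m$ copies of $\mu$ lie among the non-zero eigenvalues $\lambda_1\ge\cdots\ge\lambda_r$, and because equal values are consecutive in a non-increasing list they fill a block of positions $p,p+1,\dots,p+m-1$ with $1\le p$ and $p+m-1\le r$. I would then take $i=p$. Then $\lambda_i(A)=\mu$, so $|\lambda_i(A)|=|\mu|=\min_{l:\lambda_l(A)\neq 0}|\lambda_l(A)|$. Moreover $p\le n-r+p\le p+m-1$, the first inequality because $r\le n$ and the second because $m\ge n-r+1$; hence $n-r+i=n-r+p$ also lies in the block, so $\lambda_{n-r+i}(A)=\mu$ and therefore $\lambda_i(A)-\lambda_{n-r+i}(A)=0$. (One checks along the way that $i=p$ indeed lies in $\{1,\dots,r\}$.)

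Next I would substitute this $i$ into (32). Because $|\lambda_i(A)|$ equals $\min_{l:\lambda_l(A)\neq 0}|\lambda_l(A)|$, the fraction simplifies and the left-hand side of (32) becomes $0+\max\{|\lambda_{j'}(E)|,\,|\lambda_{n-r+j'}(E)|\}$, where $j'$ is as in the statement of Theorem 3.4. Since $E$ is Hermitian, $\|E\|=\max_j|\lambda_j(E)|$ dominates each of $|\lambda_{j'}(E)|$ and $|\lambda_{n-r+j'}(E)|$, so (32) holds for this $i$. Invoking Theorem 3.4 then yields $\lambda_i+k|\lambda_i|\le\lambda_{n-r+i}+\|E\|$ and $\lambda_i-k|\lambda_i|\ge\lambda_i-\|E\|$, which is exactly the claim; in particular the relative bounds (30) and (31) are sharper than their Weyl counterparts at this index.

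The only step that is not pure substitution is the block/indexing argument of the second paragraph: one must verify that, under the multiplicity hypothesis, some index $i$ meets the repeated eigenvalue $\mu$ simultaneously at positions $i$ and $n-r+i$, which is just the inequality $n-r+1\le m$ rewritten. (For completeness one should dispose of the trivial case $r=0$, where $A=\mathbf{0}$ and nothing is to prove, and note that if both $\mu$ and $-\mu$ attain the minimal non-zero modulus one simply lets $\mu$ be the one whose multiplicity is assumed to be $\ge n-r+1$.) Everything else is a direct appeal to Theorem 3.4, so I expect no genuine obstacle: the corollary is essentially a well-chosen specialisation of that theorem, generalising the choice $|\lambda_i(A)|=\min_l|\lambda_l(A)|$ used in Theorem 3.3.
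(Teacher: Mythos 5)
Your proposal is correct and follows essentially the same route as the paper's own proof: pick $i$ at the start of the block of copies of the minimal-modulus nonzero eigenvalue, use the multiplicity hypothesis to get $\lambda_{n-r+i}=\lambda_i$ so that condition (32) collapses to $\max\{|\lambda_{j'}(E)|,|\lambda_{n-r+j'}(E)|\}\leq\|E\|$, and then invoke Theorem 3.4. The only difference is that you spell out the index bookkeeping and the corner cases (e.g.\ both $\pm\mu$ attaining the minimum) slightly more explicitly than the paper does.
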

       \begin{proof}
           Take $i=\min\{j\in \{1,...,r\}: |\lambda_j|=\min\limits_{l:\lambda_l\neq 0}|\lambda_l|\}$. Since there are at least $n-r+1$ copies of $\lambda_i$ among the eigenvalues of $A$, by the ordering specified, $\lambda_{n-r+i}=\lambda_i$. Also, $|\lambda_i|=\min\limits_{l:\lambda_l\neq 0}|\lambda_l|$ by the definition of $i$. Hence, for any $j'\in \{1,...,n\}$,
        \begin{equation*}
        \begin{aligned}
\lambda_i-\lambda_{n-r+i}+\frac{\max \{|\lambda_{j'}(E)|, |\lambda_{n-r+j'}(E)| \}}{\min\limits_{l: \lambda_l\neq 0}|\lambda_l|}|\lambda_i|&=\max \{|\lambda_{j'}(E)|, |\lambda_{n-r+j'}(E)|\} \\ 
&\leq \max\limits_{l'}|\lambda_{l'}(E)|=\|E\|\end{aligned}
        \end{equation*}
so (32) is satisfied for such $i\in \{1,...,r\}$. By Theorem 3.4, we have $\lambda_{i}+k|\lambda_i|\leq \lambda_{n-r+i}+\|E\|$ and $\lambda_i-k|\lambda_i|\geq \lambda_i-\|E\|$. \end{proof}

\section{Relative bounds for singular values}
 Theorem 2.1 can be utilised to derive a relative bound on singular values of general matrices using the strategy mentioned in [5] as illustrated in the following result. It is structurally similar to [4, Theorem 3.1], but also applicable to singular values of any matrices, because Theorem 2.1 can be applied to Hermitian matrices without full rank. We will also see that the relative bound for singular values below involves a constant $k=\|S^+ES^+\|$ where $S$ can be thought of as a pseudo $A^{1/2}$, resembling Theorem 2.1 for eigenvalues. We first assume $m\geq n$ for definiteness.
 
 \begin{theorem}
Let $A$ be an $m\times n$ matrix of rank $r$ with SVD $A=U\Sigma V^*=U_r\Sigma_r V_r^*$. Let $E$ be any $m\times n$ matrix and $S^+=V_r\Sigma_r^{-1/2} U_r^*$. If $k=\|S^+ES^+\|\leq 1$, then \begin{equation}
    \sigma_{m+n-2r+i}(A+E)\leq \sigma_i(A)+k\sigma_i(A)
\end{equation} for any $i\in \{1,...,2r-m\}$, and also\begin{equation}
    \sigma_i(A+E)\geq \sigma_i(A)-k\sigma_i(A) 
\end{equation} for any $i\in \{1,...,r\}$.
\end{theorem}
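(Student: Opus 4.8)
The plan is to deduce the statement from Theorem 2.1 by passing to the Jordan--Wielandt (augmented) matrix. Since $m\ge n$, set
\[
\hat A=\begin{bmatrix}\mathbf 0 & A\\ A^{*} & \mathbf 0\end{bmatrix},\qquad \hat E=\begin{bmatrix}\mathbf 0 & E\\ E^{*} & \mathbf 0\end{bmatrix},
\]
both Hermitian of order $m+n$, and note that the augmentation is linear, so $\hat A+\hat E=\widehat{A+E}$. From the SVD $A=U_r\Sigma_rV_r^{*}$, the eigenvalues of $\hat A$ are $\pm\sigma_i(A)$ for $i\in\{1,\dots,r\}$ together with $m+n-2r$ zeros, so $\mbox{rank}(\hat A)=2r$ and, in the default ordering on page~2, $\lambda_i(\hat A)=\sigma_i(A)$ and $|\lambda_i(\hat A)|=\sigma_i(A)$ for $i\in\{1,\dots,r\}$. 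Similarly the eigenvalues of $\widehat{A+E}$ are $\pm\sigma_j(A+E)$ for $j\in\{1,\dots,n\}$ together with $m-n$ zeros; since each $\sigma_j(A+E)\ge0$, the $n$ largest eigenvalues of $\widehat{A+E}$ in decreasing order are $\sigma_1(A+E),\dots,\sigma_n(A+E)$, that is, $\lambda_j(\hat A+\hat E)=\sigma_j(A+E)$ for every $j\in\{1,\dots,n\}$.

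The key point is that the constant Theorem 2.1 produces for the pair $(\hat A,\hat E)$ is exactly $k=\|S^{+}ES^{+}\|$. To check this, note that the polar factor of the Hermitian matrix $\hat A$ is $\hat P=(\hat A^{2})^{1/2}$, and $\hat A^{2}=\mbox{diag}(AA^{*},A^{*}A)$, so $\hat P=\mbox{diag}(U_r\Sigma_rU_r^{*},\,V_r\Sigma_rV_r^{*})$ and $(\hat P^{1/2})^{+}=\mbox{diag}(U_r\Sigma_r^{-1/2}U_r^{*},\,V_r\Sigma_r^{-1/2}V_r^{*})$. A block multiplication then gives
\[
(\hat P^{1/2})^{+}\hat E(\hat P^{1/2})^{+}=\begin{bmatrix}\mathbf 0 & B\\ B^{*} & \mathbf 0\end{bmatrix},\qquad B=U_r\Sigma_r^{-1/2}U_r^{*}\,E\,V_r\Sigma_r^{-1/2}V_r^{*},
\]
whose $2$-norm equals $\|B\|$. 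Since $U_r$ and $V_r$ have orthonormal columns, $\|B\|=\|\Sigma_r^{-1/2}U_r^{*}EV_r\Sigma_r^{-1/2}\|=\|S^{+}ES^{+}\|$, as $S^{+}=V_r\Sigma_r^{-1/2}U_r^{*}$. Combining this with the reasoning leading to (4), applied with $\hat A,\hat E$ in place of $A,E$, gives $\|(\hat A^{1/2})^{+}\hat E(\hat A^{1/2})^{+}\|=\|S^{+}ES^{+}\|=k$, so the hypothesis $k\le1$ is precisely what is needed to invoke Theorem 2.1.

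It then remains to apply Theorem 2.1 to $(\hat A,\hat E)$ --- which has order $m+n$ and rank $2r$ --- and translate indices. The lower bound (2) gives $\lambda_i(\hat A+\hat E)\ge\lambda_i(\hat A)-k|\lambda_i(\hat A)|$ for $i\in\{1,\dots,2r\}$; restricting to $i\in\{1,\dots,r\}$ and using $\lambda_i(\hat A+\hat E)=\sigma_i(A+E)$ and $\lambda_i(\hat A)=\sigma_i(A)$ yields (37). The upper bound (1) gives $\lambda_{(m+n)-2r+i}(\hat A+\hat E)\le\lambda_i(\hat A)+k|\lambda_i(\hat A)|$ for $i\in\{1,\dots,2r\}$; for $i\in\{1,\dots,r\}$ the right-hand side is $\sigma_i(A)+k\sigma_i(A)$, and the index $m+n-2r+i$ lies in $\{1,\dots,n\}$ exactly when $i\le2r-m$ (it is always at least $1$, since $2r\le m+n$), so for such $i$ the left-hand side equals $\sigma_{m+n-2r+i}(A+E)$ and (36) follows. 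If $m<n$, one runs the same argument with $A^{*}$ in place of $A$.

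I expect the main obstacle to be bookkeeping rather than anything substantive: one must carefully track that $\hat A$ carries the negative eigenvalues $-\sigma_i(A)$, so that the default ordering and the factors $|\lambda_i(\hat A)|$ behave as claimed on the first $r$ indices, and one must pin down the exact range $i\in\{1,\dots,2r-m\}$ on which (36) is non-vacuous. The polar-factor computation and the norm identity $\|(\hat P^{1/2})^{+}\hat E(\hat P^{1/2})^{+}\|=\|S^{+}ES^{+}\|$ are routine once one notices that $\hat A^{2}$ is block diagonal.
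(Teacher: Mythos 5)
Your proposal is correct and follows essentially the same route as the paper: form the Jordan--Wielandt matrices, show the polar factor of $\hat A$ is block diagonal so that $\|(\hat A^{1/2})^{+}\hat E(\hat A^{1/2})^{+}\|=\|S^{+}ES^{+}\|$, apply Theorem 2.1 to the pair $(\hat A,\hat E)$ of rank $2r$, and translate the indices back to singular values. The only (immaterial) difference is that you obtain the block-diagonal polar factor via $\hat P=(\hat A^{2})^{1/2}$ rather than by reading it off the explicit spectral decomposition of $\hat A$ as the paper does.
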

\begin{proof}
  Let $\Tilde{A}=\left[\begin{array}{cc}
     & A \\
   A^*  & 
\end{array}\right]$, $\Tilde{E}=\left[\begin{array}{cc}
     & E \\
  E^*   & 
\end{array}\right]$, $\Tilde{A}+\Tilde{E}=\left[\begin{array}{cc}
     &  A+E\\
   (A+E)^*  & 
\end{array}\right]$ which are all Hermitian $(m+n)\times (m+n)$ matrices. We first want to show (36). Using Theorem 7.3.3 in [7], \begin{equation*}
\Tilde{A}=\left[\begin{array}{cc}
         & A \\
      A^*   & 
    \end{array}\right]=\left[\begin{array}{ccc}
       \frac{1}{\sqrt{2}}U  &  -\frac{1}{\sqrt{2}}U & U^\perp\\
  \frac{1}{\sqrt{2}}V &   \frac{1}{\sqrt{2}}V & \mathbf{0}    \end{array}\right] \left[\begin{array}{ccc}
     \Sigma  &  & \\
       & -\Sigma & \\
     &  & \mathbf{0}
  \end{array}\right]\left[\begin{array}{ccc}
\frac{1}{\sqrt{2}}U^* & \frac{1}{\sqrt{2}}V^*\\
-\frac{1}{\sqrt{2}}U^* & \frac{1}{\sqrt{2}}V^*  \\
{U^\perp}^* & \mathbf{0} \\ \end{array}\right]
\end{equation*} is its spectral decomposition with eigenvalues $\sigma_1(A),..., \sigma_n(A), -\sigma_1(A),...,-\sigma_n(A)$ and $m-n$ copies of zeros. Since $A$ has rank $r$, $\Tilde{A}$ has rank $2r$. Our aim is to apply Theorem 2.1 to $\Tilde{A}$ and $\tilde{E}$ suggesting that if $\|(\Tilde{A}^{1/2})^+\Tilde{E}(\Tilde{A}^{1/2})^+\|\leq 1$, then for any $i\in \{1,...,2r\},$ \begin{equation}
\begin{aligned}
    \lambda_{m+n-2r+i}(\Tilde{A}+\Tilde{E})&\leq \lambda_i(\Tilde{A})+\|(\Tilde{A}^{1/2})^+\Tilde{E}(\Tilde{A}^{1/2})^+\||\lambda_i(\Tilde{A})|\\
   \end{aligned}
\end{equation}
Let $\Tilde{P}$ be the polar factor of $\Tilde{A}$. By the definition of normal square root, we get \begin{equation*}
\begin{aligned}
    \Tilde{P}^{1/2}&=\left[\begin{array}{ccc}
       \frac{1}{\sqrt{2}}U  &  -\frac{1}{\sqrt{2}}U & U^\perp\\
  \frac{1}{\sqrt{2}}V &   \frac{1}{\sqrt{2}}V & \mathbf{0}    \end{array}\right] \left[\begin{array}{ccc}
     \Sigma^{1/2}  &  & \\
       & \Sigma^{1/2} & \\
     &  & \mathbf{0}
  \end{array}\right]\left[\begin{array}{ccc}
\frac{1}{\sqrt{2}}U^* & \frac{1}{\sqrt{2}}V^*\\
-\frac{1}{\sqrt{2}}U^* & \frac{1}{\sqrt{2}}V^*  \\
{U^\perp}^* & \mathbf{0} \\ \end{array}\right] \\
&=\left[\begin{array}{ccc}
       \frac{1}{\sqrt{2}}U\Sigma^{1/2}  &  -\frac{1}{\sqrt{2}}U\Sigma^{1/2} & \mathbf{0}\\
  \frac{1}{\sqrt{2}}V\Sigma^{1/2} &   \frac{1}{\sqrt{2}}V\Sigma^{1/2} & \mathbf{0}    \end{array}\right]\left[\begin{array}{ccc}
\frac{1}{\sqrt{2}}U^* & \frac{1}{\sqrt{2}}V^*\\
-\frac{1}{\sqrt{2}}U^* & \frac{1}{\sqrt{2}}V^*  \\
{U^\perp}^* & \mathbf{0} \\ \end{array}\right]\\
&=\left[\begin{array}{cc}
   U\Sigma^{1/2}U^*  &  \\
     & V\Sigma^{1/2}V^*
\end{array}\right]=\left[\begin{array}{cc}
   U_r\Sigma_r^{1/2}U_r^*  &  \\
     & V_r\Sigma_r^{1/2}V_r^*
\end{array}\right]\end{aligned}
\end{equation*}
because there are $r$ non-zero diagonal entries in $\Sigma^{1/2}$. Since $\Tilde{P}^{1/2}$ is block diagonal, \begin{equation*}
  (\Tilde{P}^{1/2})^+ =\left[\begin{array}{cc}
   (U_r\Sigma_r^{1/2}U_r^* )^+ &  \\
     & (V_r\Sigma_r^{1/2}V_r^*)^+
\end{array}\right] =\left[\begin{array}{cc}
   U_r\Sigma_r^{-1/2}U_r^*  &  \\
     & V_r\Sigma_r^{-1/2}V_r^*
\end{array}\right]\end{equation*}
Substituting this into $(\Tilde{P}^{1/2})^+\Tilde{E}(\Tilde{P}^{1/2})^+$, we obtain that
\begin{equation*}\begin{aligned}
   (\Tilde{P}^{1/2})^+\Tilde{E}(\Tilde{P}^{1/2})^+
&=\left[\begin{array}{cc}
    &U_r\Sigma_r^{-1/2}U_r^* E\\
V_r\Sigma_r^{-1/2}V_r^*E^* & 
\end{array}\right]\left[\begin{array}{cc}
   U_r\Sigma_r^{-1/2}U_r^*  &  \\
     & V_r\Sigma_r^{-1/2}V_r^*
\end{array}\right]\\
&=\left[\begin{array}{cc}
    &U_r\Sigma_r^{-1/2}U_r^* E V_r\Sigma_r^{-1/2}V_r^*\\
(U_r\Sigma_r^{-1/2}U_r^* E V_r\Sigma_r^{-1/2}V_r^*)^*& 
\end{array}\right]
\end{aligned}
\end{equation*}
    which is the Jordan-Wielandt matrix of $U_r\Sigma_r^{-1/2}U_r^* E V_r\Sigma_r^{-1/2}V_r^*$ so its eigenvalues are the singular values of $U_r\Sigma_r^{-1/2}U_r^* E V_r\Sigma_r^{-1/2}V_r^*$, their negations and zeros. As $U$ is $m\times n$ orthonormal and $V$ is $n\times n$ unitary, $\|UXV^*\|=\|X\|=\|X^*\|=\|UX^*V^*\|=\|(UX^*V^*)^*\|=\|VXU^*\|$ for any $X$.  Therefore,
 \begin{equation}
       \begin{aligned}
            \|(\Tilde{P}^{1/2})^+\Tilde{E}(\Tilde{P}^{1/2})^+\|&=\max\limits_i|\lambda_i((\Tilde{P}^{1/2})^+\Tilde{E}(\Tilde{P}^{1/2})^+)|\\
            &=\sigma_1(U_r\Sigma_r^{-1/2}U_r^* E V_r\Sigma_r^{-1/2}V_r^*) \\            &=\|U_r\Sigma_r^{-1/2}U_r^* E V_r\Sigma_r^{-1/2}V_r^* \| \\
            &=\|U\underbrace{\left[\begin{array}{cc}
\Sigma_r^{-1/2}&  \\
                 & \mathbf{0}
            \end{array}\right]U^* E V\left[\begin{array}{cc}
\Sigma_r^{-1/2} &  \\
                 & \mathbf{0}
            \end{array}\right]}_{X}V^* \|      \\
            &=\|\left[\begin{array}{cc}
\Sigma_r^{-1/2}&  \\
                 & \mathbf{0}
            \end{array}\right]U^* E V\left[\begin{array}{cc}
\Sigma_r^{-1/2} &  \\
                 & \mathbf{0}
            \end{array}\right] \|   \\
            &=\|V\left[\begin{array}{cc}
\Sigma_r^{-1/2}&  \\
                 & \mathbf{0}
            \end{array}\right]U^* E V\left[\begin{array}{cc}
\Sigma_r^{-1/2} &  \\
                 & \mathbf{0}
            \end{array}\right]U^*\|  \\
            &=\|V_r\Sigma_r^{-1/2} U_r^* E V_r\Sigma_r^{-1/2}U_r^* \| =\|S^+ES^+\|       
            \end{aligned}   \end{equation}
Since $\|(\Tilde{A}^{1/2})^+\Tilde{E}(\Tilde{A}^{1/2})^+\|\overset{(4)}{=}\|(\Tilde{P}^{1/2})^+\Tilde{E}(\Tilde{P}^{1/2})^+\|\leq 1$, by (1) in Theorem 2.1, we have (38) for $i\in \{1,...,2r\}$. Substituting (39) into (38), we get\begin{equation}
     \lambda_{m+n-2r+i}(\Tilde{A}+\Tilde{E})\leq \lambda_i(\Tilde{A})+k|\lambda_i(\Tilde{A})|
     \end{equation}    where $\lambda_1(\Tilde{A})\geq...\geq \lambda_{2r}(\Tilde{A}), \lambda_{2r+1}(\Tilde{A})=...=\lambda_{m+n}(\Tilde{A})=0$ and the eigenvalues of $\Tilde{A}+\Tilde{E}$ are in decreasing order, as in Theorem 2.1. Since the eigenvalues of $\Tilde{A}+\Tilde{E}$ are $\sigma_1(A+E),...,\sigma_n(A+E), 0,..,0, -\sigma_n(A+E),...,-\sigma_1(A+E)$, for $m+n-2r+i\leq n$ which is $i\leq 2r-m$, $\lambda_{m+n-2r+i}(\Tilde{A}+\Tilde{E})=\sigma_{m+n-2r+i}(A+E)$. The eigenvalues of $\Tilde{A}$ in the ordering specified before are $\sigma_1(A),...,\sigma_r(A), -\sigma_r(A),...,-\sigma_1(A), 0,...,0$ so for $i\leq r$, we have $\lambda_i(\Tilde{A})=\sigma_i(A)$. Since $r\leq m$, $2r-m\leq r$, so for $i\leq 2r-m$, we have $m+n-2r+i\leq n$ and $i\leq r$, giving $\lambda_{m+n-2r+i}(\Tilde{A}+\Tilde{E})=\sigma_{m+n-2r+i}(A+E)$ and $\lambda_i(\Tilde{A})=\sigma_i(A)$. Substituting them into (40), we get  (36) for $i\in \{1,...,2r-m\}.$

To show (37), we apply (2) in Theorem 2.1 to $\Tilde{A}$ and $\Tilde{E}$, and obtain for $i\in\{1,...,2r\}$ \begin{equation}
     \lambda_i(\Tilde{A}+\Tilde{E})\geq \lambda_i(\Tilde{A})-k|\lambda_i(\Tilde{A})|
 \end{equation}
  When $i\leq r$, $\lambda_i(\Tilde{A})=\sigma_i(A)$ and when $i\leq n$, $\lambda_i(\Tilde{A}+\Tilde{E})=\sigma_i(A+E)$, so when $i\leq r$ we have both. Substituting them into (41), we get (37) for $i\in \{1,...,r\}$. 
 \end{proof}
 \begin{remark}
     \textnormal{Since $S^+=V_r\Sigma_r^{-1/2}U_r^*$, $S=U_r \Sigma_r^{1/2}V_r^*$. Then
$S^*S=V_r\Sigma_rV_r^*$ which is the right polar factor of the tall matrix $A$ by [7, Theorem 7.3.1]. In the case that $A$ is normal, $A^{1/2}=VD^{1/2}V^*$ and $(A^{1/2})^*A^{1/2}=V(D^{1/2})^*D^{1/2}V^*=V|D|V^*$ which is the polar factor of $A$. Therefore, such $S$ can be regarded as pseudo $A^{1/2}$  because it exhibits a similar property of  $A^{1/2}$ when $A$ is normal. Hence, $k=\|S^+ES^+\|$ in Theorem 4.1 resembles $k=\|(A^{1/2})^+E(A^{1/2})^+\|$ used in Theorem 2.1.}
 \end{remark}

 When $m<n$, we can apply Theorem 4.1 to $A^*$, $E^*$ with $k=\|\Tilde{S}^+E^*\Tilde{S}^+\|=\|S^+ES^+\|$ where $\Tilde{S}^+=U_r\Sigma_r^{-1/2}V_r^*=(S^+)^*$. Since singular values are invariant under matrix Hermitian, we will get (36) for $i\in \{1,..., 2r-n\}$ and (37) for $i \in \{1,...,2r\}$. Therefore, in general for any $m \times n$ matrices $A$ and $E$ in the context of Theorem 4.1, we have (36) for $i \in \{1,..., 2r-\max\{m, n\}\}$ and (37) for $i \in \{1,..., 2r\}$, where $r=\mbox{rank}(A)$.

When $m=n$, $A$ is a square matrix of rank $r$ and has a left and right polar decomposition as $A=P_1Q=QP_2$. Let $A=U\Sigma V^*$ be its SVD. By [7, Theorem 7.3.1], $P_1=(AA^*)^{1/2}=U\Sigma U^*$, and $P_2=(A^*A)^{1/2}=V\Sigma V^*$. Therefore, $P_1^{1/2}=U\Sigma^{1/2}U^*$ and $P_2=V\Sigma^{1/2}V^*$ which leads to $(P_1^{1/2})^+=U_r\Sigma_r^{-1/2} U_r^*$ and $(P_2^{1/2})^+=V_r\Sigma_r^{-1/2} V_r^*$. Recall the third and seventh line in (39), we have \begin{equation*}
\begin{aligned}
     \|(P_1^{1/2})^+E(P_2^{1/2})^+\|
    &=\|U_r\Sigma_r^{-1/2}U_r^* E V_r\Sigma_r^{-1/2}V_r^* \|\\
    &\overset{(39)}{=}\|V_r\Sigma_r^{-1/2} U_r^* E V_r\Sigma_r^{-1/2}U_r^* \| \\ 
    &=\|S^+ES^+\|=k
    \end{aligned}
\end{equation*} Hence, we can replace $k=\|S^+ES^+\|$ by $\|(P_1^{1/2})^+E(P_2^{1/2})^+\|$ in Theorem 4.1, giving an analogue of [4, Theorem 3.1] for $A$ being any general square matrix. \\

\textbf{Declaration of interests:} none

\textbf{Acknowledgements.}  I would like to thank Professor Y. Nakatsukasa who provided me with clear guidance and helpful advice for this paper.
\section*{References}
[1] F.L. Bauer, C.T. Fike, \textit{Norms and exclusion theorems}, Numer. Math. 2 (1960) 137-141. \\\relax
[2] R. Bouldin, \textit{The pseudo-inverse of a product}, SIAM J. Appl. Math. 24 (1973) 489-495. \\\relax
[3] J. Demmel, K. Veselić, \textit{Jacobi’s method is more accurate than QR}, SIAM J. Matrix Anal. Appl. 13
(1992) 1204–1245. \\\relax
[4] F.M. Dopico, J. Moro, J.M. Morela, \textit{Weyl-type relative perturbation bounds for eigensystems of Hermitian matrices}, Linear Algebra Appl. 309 (2000) 3-18.\\\relax
[5] S.C. Eisenstat, I.C.F. Ipsen, \textit{Relative perturbation techniques for singular value problems}, SIAM
J. Numer. Anal. 32 (1995) 1972-1988.\\\relax 
[6] S.C. Eisenstat, I.C.F. Ipsen, \textit{Three absolute perturbation bounds for matrix eigenvalues imply relative bounds}, SIAM J. Matrix Anal. Appl. 20 (1998) 149-158. \\\relax
[7] R.A. Horn, C.R. Johnson, \textit{Matrix Analysis}, second ed., Cambridge University Press, Cambridge, 2013. \\\relax
[8] I.C.F. Ipsen, \textit{Relative perturbation results for matrix eigenvalues and singular values}, Acta Numer. 7 (1998) 151-201.\\\relax
[9] S. Izumino,  \textit{The product of operators with closed range and an extension of the reverse order law}, Tôhoku Math. J. 34 (1982) 43-52.\\\relax
[10] R. Mathias, \textit{Spectral perturbation bounds for positive definite matrices}, SIAM J. Matrix Anal. Appl. 18 (1997) 959-980.\\\relax
[11] K. Veselić, I. Slapničar, \textit{Floating-point perturbations of Hermitian matrices}, Linear
Algebra Appl. 195 (1993) 81-116. \\\relax

\end{titlepage}       
\addcontentsline{toc}{chapter}{Bibliography}

\renewcommand{\bibname}{References}
\bibliography{refs}      
\bibliographystyle{plain}  
\end{document}